\setlist{leftmargin=2\parindent}
\newenvironment{smatrix}{\left( \begin{smallmatrix} } {\end{smallmatrix} \right) }
\newcommand{\stbt}[4]{\begin{smatrix}#1 & #2 \\ #3 & #4\end{smatrix}}
\theoremstyle{plain}
\newtheorem{theorem}{Theorem}[section]
\newtheorem{lemma}[theorem]{Lemma}
\newtheorem{proposition}[theorem]{Proposition}
\newtheorem{corollary}[theorem]{Corollary}
\newtheorem{definition}[theorem]{Definition}
\newtheorem{notation}[theorem]{Notation}
\theoremstyle{remark}
\declaretheorem[name=Remark,sibling=theorem,qed={\lower-0.3ex\hbox{$\diamond$}}]{remark}
\declaretheorem[name=Note,sibling=theorem,qed={\lower-0.3ex\hbox{$\diamond$}}]{note}
\newcommand{\ah}{\mathrm{ah}}
\DeclareMathOperator{\GL}{GL}
\DeclareMathOperator{\SL}{SL}
\DeclareMathOperator{\GSp}{GSp}
\DeclareMathOperator{\Res}{Res}
\DeclareMathOperator{\imp}{imp}
\DeclareMathOperator{\Spec}{Spec}
\DeclareMathOperator{\Sym}{Sym}
\DeclareMathOperator{\alg}{alg}
\DeclareMathOperator{\Nm}{Nm}
\DeclareMathOperator{\BC}{BC}
\DeclareMathOperator{\vol}{vol}
\DeclareMathOperator{\As}{As}
\renewcommand{\AA}{\mathbf{A}}
\newcommand{\CC}{\mathbf{C}}
\newcommand{\QQbar}{\overline{\QQ}}
\newcommand{\QQ}{\mathbf{Q}}
\newcommand{\Qp}{\QQ_p}
\newcommand{\fN}{\mathfrak{N}}
\newcommand{\ZZ}{\mathbf{Z}}
\newcommand{\Zp}{\ZZ_p}
\newcommand{\cE}{\mathcal{E}}
\newcommand{\cF}{\mathcal{F}}
\newcommand{\cH}{\mathcal{H}}
\newcommand{\cL}{\mathcal{L}}
\newcommand{\cM}{\mathcal{M}}
\newcommand{\cO}{\mathcal{O}}
\newcommand{\cS}{\mathcal{S}}
\newcommand{\cW}{\mathcal{W}}
\newcommand{\ch}{\mathrm{ch}}
\newcommand{\into}{\hookrightarrow}
\newcommand{\sgn}{\mathrm{sgn}}
\newcommand{\uPi}{\underline{\Pi}}
\newcommand{\pp}{\mathfrak{p}}
\numberwithin{equation}{section}
\renewcommand{\le}{\leqslant}
\renewcommand{\ge}{\geqslant}
\author{David Loeffler}
\address[Loeffler]{UniDistance Suisse, Schinerstrasse 18, 3900 Brig, Switzerland}
\email{david.loeffler@unidistance.ch}
\urladdr{\href{http://orcid.org/0000-0001-9069-1877}{0000-0001-9069-1877}}
\author{Sarah Livia Zerbes}
\address[Zerbes]{Department of Mathematics, ETH Z\"urich, R\"amistrasse 101, 8092 Z\"urich, Switzerland}
\email{sarah.zerbes@math.ethz.ch}
\urladdr{\href{http://orcid.org/0000-0001-8650-9622}{0000-0001-8650-9622}}
\thanks{D.L. gratefully acknowledges the support of the European Research Council through the Horizon 2020 Excellent Science programme (Consolidator Grant ``ShimBSD: Shimura varieties and the BSD conjecture'', grant ID 101001051)}
\title{Poles of $p$-adic Asai $L$-functions and distinguished representations}
\begin{document}
 \renewcommand{\crefrangeconjunction}{--} 

 \begin{abstract}
  We give a criterion in terms of $p$-adic Asai $L$-functions for a cuspidal automorphic representation of $\GL_2$ over a real quadratic field $F$ to be a distinguished representation, providing a $p$-adic counterpart of a well-known theorem of Flicker for the complex Asai $L$-function.
 \end{abstract}

 \subjclass[2020]{11F41, 
 11F67 
 }

 \maketitle

\section{Introduction}

 \subsection{Background: complex case}

  Let $\Pi$ be a cuspidal automorphic representation of $\GL_2 / F$, for a real quadratic field $F$. Then the \emph{Asai}, or \emph{twisted tensor}, $L$-series $L_{\As}(\Pi, s)$ is defined by an Euler product which converges for $\operatorname{Re}(s) > 1$. It always has meromorphic continuation to $\CC$, but the continuation is not always an entire function: it is holomorphic away from $s = 1$, but it can have a simple pole at $s = 1$. Similar properties hold more generally for the twisted Asai $L$-series $L_{\As}(\Pi, \chi, s)$ for $\chi$ a Dirichlet character. The occurrence of these poles is governed by the following theorem (which is due to Flicker \cite{flicker88} for $\chi = 1$, and can be extended to general $\chi$ using results of Lapid--Rogawski \cite{lapidrogawski98}):

  \begin{theorem}\label{thm:whenpoles}
   The following are equivalent:
   \begin{enumerate}
    \item The function $L_{\As}(\Pi, \chi, s)$ has a pole at $s = 1$.
    \item $\Pi$ is \emph{$\chi$-distinguished}, i.e.,~we have $\chi^{-2} = (\omega_{\Pi})|_{\QQ}$ (where $\omega_{\Pi}$ is the central character of $\Pi$) and the integral
    \[ \int_{\AA^\times \GL_2(\QQ) \backslash \GL_2(\AA)} \phi(h) \chi(\det h) \, \mathrm{d}h \]
    is non-zero for some $\phi \in \Pi$. (Note that the condition on $\chi$ is required for the integrand to be well-defined.)
   \end{enumerate}
   Moreover, there exist characters $\chi$ such that $\Pi$ is $\chi$-distinguished if, and only if, $\Pi$ is a twist of a base-change from $\GL_2 / \QQ$.
  \end{theorem}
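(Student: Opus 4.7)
The plan is to deploy the Rankin--Selberg integral representation of the Asai $L$-function, introduced by Asai and exploited by Flicker for $\chi=1$ with refinements by Lapid--Rogawski. Given a cusp form $\phi \in \Pi$ (viewed as an automorphic form on $\GL_2(\AA_F)$) and a Schwartz--Bruhat function $\Phi$ on $\AA^2$, form the mirabolic Eisenstein series
\[ E(h, \Phi, \chi, s) = |\det h|^s \sum_{\gamma \in B(\QQ)\backslash \GL_2(\QQ)} \int_{\AA^\times} \Phi\bigl((0,t)\gamma h\bigr) \chi(t) |t|^{2s} \, d^\times t \]
on $\GL_2(\AA)$, and set
\[ Z(\phi, \Phi, \chi, s) = \int_{\AA^\times \GL_2(\QQ)\backslash \GL_2(\AA)} \phi(h) E(h, \Phi, \chi, s) \, dh, \]
which converges for $\mathrm{Re}(s)\gg 0$ by cuspidality and inherits the meromorphic continuation of $E$.

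The first step is to unfold $Z$ using the Whittaker expansion of $\phi$ restricted to $\GL_2(\AA_\QQ)$, collapsing the sum over $B(\QQ)\backslash \GL_2(\QQ)$ against the integration domain. The resulting Eulerian product has local factors equal to $L(\As(\Pi_v)\otimes \chi_v, s)$ at finite unramified places, and holomorphic local factors at the remaining places which can simultaneously be made non-zero at $s=1$ by varying the test data. Thus
\[ Z(\phi, \Phi, \chi, s) = L_{\As}(\Pi, \chi, s)\cdot\prod_{v\in S} Z_v(\phi_v,\Phi_v,\chi_v,s) \]
for a finite set $S$. The second step is the residue analysis: the Eisenstein series $E(\cdot, \Phi,\chi,s)$ is regular at $s=1$ unless $\chi^2\omega_\Pi|_\QQ=1$, in which case it has a simple pole with residue a nonzero constant multiple of $\chi(\det h)^{-1}$ (a standard calculation, essentially Poisson summation on $\Phi$). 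Taking residues of both sides of the unfolded identity identifies $\operatorname{Res}_{s=1}L_{\As}(\Pi, \chi, s)$ with the period $\int_{\AA^\times \GL_2(\QQ)\backslash\GL_2(\AA)}\phi(h)\chi(\det h)\,dh$ up to non-zero local constants; varying $\phi$ and $\Phi$ then yields (1) $\Leftrightarrow$ (2).

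For the final clause, I would use the Galois-theoretic factorisation
\[ L(\Pi\times \Pi^\sigma \otimes (\chi\circ\Nm), s) = L_{\As}(\Pi,\chi,s)\cdot L_{\As}(\Pi, \chi\cdot\chi_{F/\QQ}, s) \]
coming from $\mathrm{Ind}_{W_F}^{W_\QQ}(V\otimes V^\sigma) = \As(V)\oplus\As(V)\otimes\chi_{F/\QQ}$ on Langlands parameters. If $\Pi$ is a twist of $\BC(\pi)$, then $\Pi^\sigma$ differs from $\Pi$ only by a character, and a direct local computation at split and inert primes exhibits an abelian factor in $L_{\As}(\Pi,\chi,s)$ which produces a pole for suitable $\chi$. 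Conversely, a pole at $s=1$ forces a pole of the left-hand Rankin--Selberg $L$-function, whence by Jacquet--Shalika $\Pi^\sigma\cong\Pi^\vee\otimes(\chi\circ\Nm)^{-1}$; combined with $\Pi^\vee\cong\Pi\otimes\omega_\Pi^{-1}$ this gives a twisted $\sigma$-invariance of $\Pi$, and Langlands's cyclic base change for $\GL_2$ then realises $\Pi$ as a twist of a base change from $\GL_2/\QQ$. The main obstacle I anticipate is the non-vanishing analysis of the auxiliary local zeta integrals at ramified and archimedean places, together with carefully descending from twisted $\sigma$-invariance to an actual base-change point (requiring a twisting argument to absorb the character $\omega_\Pi\cdot(\chi\circ\Nm)$).
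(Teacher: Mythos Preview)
The paper does not itself prove this theorem; it is stated in the introduction as a known result, attributed to Flicker \cite{flicker88} for $\chi=1$ and extended to general $\chi$ via Lapid--Rogawski \cite{lapidrogawski98}. Your sketch is essentially the standard argument from those references: the Asai integral representation with a mirabolic Eisenstein series, unfolding to an Euler product of local Asai factors, residue analysis at $s=1$ producing the $\chi$-twisted period, and for the final clause the factorisation $L(\Pi\times\Pi^\sigma\otimes(\chi\circ\Nm),s)=L_{\As}(\Pi,\chi,s)\,L_{\As}(\Pi,\chi\eta_F,s)$ combined with Jacquet--Shalika cuspidality and cyclic base change. The paper does deploy the same integral representation in \S2 (the computation of $\mathcal{I}(\Pi)$ and $\mathcal{I}_\chi(\Pi)$), but there it \emph{uses} \cref{thm:whenpoles} as input rather than re-proving it.

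Two minor points on your sketch. First, the residue of the Eisenstein series at $s=1$ is proportional to $\widehat{\Phi}(0)$ (respectively $\Phi(0)$ on the other side of the functional equation), so ``varying $\Phi$'' must include choosing $\Phi$ with this quantity nonzero; this is implicit but worth stating. Second, in the converse direction of the final clause, the passage from twisted $\sigma$-invariance $\Pi^\sigma\cong\Pi\otimes\psi$ to ``$\Pi$ is a twist of a base change'' requires that the twisting character $\psi$ satisfy $\psi\cdot\psi^\sigma=1$, i.e.\ $\psi=\mu^\sigma/\mu$ for some Hecke character $\mu$ of $F$ (Hilbert 90 for id\`ele class characters); only then does $\Pi\otimes\mu$ become genuinely $\sigma$-invariant and hence a base change. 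You flagged this as a potential obstacle, and indeed it is the one place where some care is needed.
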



  Thus the presence or absence of poles of $L_{\As}(\Pi, \chi, s)$ serves to identify representations with a $\GL_2(\AA)$-invariant period, and the representations with such a period are precisely those in the image of a specific functorial lifting -- both statements which admit a wealth of interesting generalisations to other automorphic forms and their $L$-series.

  If $\Pi = \BC(\pi) \times \psi$ is a twist of a base-change, then we have $L_{\As}(\Pi, \chi, s) = L_{\As}(\BC(\pi), \chi\psi|_{\QQ}, s)$, so we can suppose $\psi = 1$ without loss of generality. Then we have the factorisation formula (\cref{cor:factorisation} below)
  \[ \tag{\dag} L_{\As}(\Pi, \chi, s) = L(\Sym^2 \pi, \chi, s) \cdot L(\chi \omega_{\pi} \eta_F, s), \]
  where $\eta_F$ is the quadratic character associated to $F / \QQ$. (Compare e.g.~\cite[Ch.~4, Cor.~1]{ghatethesis} for imaginary quadratic fields.) It follows that $\Pi$ is always $\chi$-distinguished for $\chi = \eta_F\omega_{\pi}^{-1}$; and this is the only such character unless $\pi$ is dihedral (in which case there may be further poles coming from the $\Sym^2$ term).

 \subsection{P-adic case}

  In the recent paper \cite{grossiloefflerzerbesLfunct}, Grossi and the present authors defined a $p$-adic counterpart of the Asai $L$-function, assuming $p = \pp_1 \pp_2$ is split in $F$ and $\Pi$ is generated by a holomorphic eigenform $\cF$ which is ordinary at $\pp_1$. However, this function is \emph{always} a $p$-adic measure, i.e.,~an analytic function on the weight space; it never has poles, irrespective of the choice of $\cF$.

  In this $p$-adic setting, we do have an analogue of the factorisation formula $(\dag)$ (although this is a significantly deeper theorem than the complex version, and will be established in forthcoming work of D.~Krekov); and the $p$-adic zeta function has a pole at $s = 1$, like its complex counterpart. However, the $p$-adic symmetric square $L$-function has a ``trivial zero'' at $s = 1$, forced by the shape of the interpolation factors at $p$, and this trivial zero cancels out the pole of the $p$-adic zeta function. Hence the presence or absence of poles of the $p$-adic $L$-function can no longer be used to characterize which automorphic representations are distinguished.

  In this short note, we show that this problem can be circumvented via an ``improved'' $p$-adic Asai $L$-function, interpolating the values $L(\As \cF_m, \chi, 1-m)$ as $\cF_m$ varies through a Hida family of Hilbert eigenforms of weight $(k + 2m, k)$ for varying $m$ and fixed $k$. This function satisfies an interpolation formula, relating it to the Asai $L$-function of the weight $(k + 2m, k)$ specialisation at $s = 1 - m$; and this interpolation formula involves fewer Euler factors than the $p$-adic $L$-function of \cite{grossiloefflerzerbesLfunct} -- in particular, the Euler factor that gives the trivial zero does not appear. We show that this improved $p$-adic $L$-function genuinely can have a pole at $m = 0$; and, moreover, this pole occurs if and only if the weight $k$ specialisation generates a $\chi$-distinguished representation, giving a $p$-adic counterpart of \cref{thm:whenpoles}.

 \subsection*{Acknowledgements} It is a pleasure to thank the two anonymous referees for their insightful remarks on the original manuscript.


\section{Complex $L$-series and period integrals}

 We fix a real quadratic field $F$ of discriminant $D$, and a numbering of its real embeddings as $(\sigma_1, \sigma_2)$.

 \subsection{Asai $L$-functions}

   Let $\Pi$ be a (unitary) cuspidal automorphic representation of $\GL_2 / F$, generated by a cuspidal Hilbert modular newform $\cF$ of level $\fN$ and weight $(k_1, k_2)$. Then we define the Asai $L$-function $L_{\As}(\Pi, s)$ as in \S 1.2 of \cite{grossiloefflerzerbesLfunct}, with Euler factors at the bad primes determined via the local Langlands correspondence; this extends the original definition due to Asai \cite{asai77} in the case $\fN = 1$. We define twisted Asai $L$-functions $L_{\As}(\Pi, \chi, s)$ similarly.

   We also define the \emph{imprimitive} Asai $L$-function $L_{\As}^{\imp}(\Pi, \chi, s)$ as Definition 5.1.2 of \emph{op.cit.}, given by
   \[
    L_{\As}^{\imp}(\Pi, s) = L_{N_\chi \Nm(\fN)} \Big(\chi^2 \omega_{\Pi} |_{\QQ}, 2s\Big) \cdot \sum_{\substack{n \ge 1 \\ (n, N_\chi) = 1}} a^\circ_{n \cO_F}(\cF) \chi(n) n^{-s},
   \]
   where $a^\circ_{\mathfrak{m}}(\cF)$ is the (unitarily-normalised) Hecke eigenvalue of $\cF$ at the ideal $\mathfrak{m}$, and $L_{N_\chi \Nm(\fN)}(-)$ denotes the Dirichlet $L$-series with the Euler factors at primes dividing $N_\chi \Nm(\fN)$ omitted. These are related by
   \[
    L^{\imp}_{\As}(\Pi, \chi, s) = L_{\As}(\Pi, \chi, s) \cdot \prod_{\ell \mid N_{\chi} \operatorname{Nm}(\fN)} C_\ell(\Pi, \chi, \ell^{-s})
   \]
   for some polynomials $C_\ell(\Pi, \chi, X) \in \QQbar[X]$; moreover, the zeroes of the factors $C_\ell(\Pi, \chi, \ell^{-s})$ have real parts $\le 0$ (\cite[Proposition 5.1.3]{leiloefflerzerbes18}). Thus $L^{\imp}_{\As}(\Pi, \chi, s)$, like $L_{\As}(\Pi, \chi, s)$, has a simple pole at $s = 1$ when $\Pi$ is $\chi$-distinguished and is an entire function otherwise.

  \subsubsection*{Base-change case}

   Given a cuspidal automorphic representation $\pi$ of $\GL_2 / \QQ$, there is an automorphic representation $\Pi = \BC(\pi)$ of $\GL_2 / F$, the base-change (or Doi--Nagunuma lift) of $\pi$, which is cuspidal unless $\pi$ is induced from a Gr\"ossencharacter of $F$. Note that the central character of $\BC(\pi)$ is given by $\omega_{\Pi} = \omega_\pi \circ \Nm_{F / \QQ}$; and if $\pi$ is generated by a holomorphic modular form of weight $k$, then $\Pi$ is generated by a holomorphic Hilbert modular form of parallel weight $(k, k)$.

   \begin{corollary}\label{cor:factorisation}
    If $\Pi = \operatorname{BC}(\pi)$, then there is a factorisation of complex $L$-functions
    \[ L_{\As}(\Pi, \chi, s) = L(\Sym^2 \pi, \chi, s) \cdot L(\chi \eta_F \omega_{\pi}, s), \]
    for any $\chi$, where $L(\Sym^2 \pi, \chi, s)$ is the twisted symmetric square $L$-function.
   \end{corollary}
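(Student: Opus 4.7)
The plan is to establish the identity locally at each prime via the local Langlands correspondence, reducing matters to a standard decomposition of a tensor induction on the Weil--Deligne side. By construction (see \S1.2 of \cite{grossiloefflerzerbesLfunct}), the Asai $L$-function $L_{\As}(\Pi, \chi, s)$ is the $L$-function attached to the Weil--Deligne representation $\otimes\text{-Ind}_{W_F}^{W_\QQ}(\rho_\Pi) \otimes \chi$, where $\rho_\Pi$ is the two-dimensional $L$-parameter of $\Pi$. For $\Pi = \BC(\pi)$ one has simply $\rho_\Pi = \rho_\pi|_{W_F}$, so the task reduces to decomposing $\otimes\text{-Ind}_{W_F}^{W_\QQ}(\rho_\pi|_{W_F})$.

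The key input would be the standard representation-theoretic identity: for a subgroup $H \subset G$ of index $2$ with non-trivial quotient character $\eta$, and for any representation $V$ of $G$,
\[ \otimes\text{-Ind}_H^G(V|_H) \cong \Sym^2(V) \oplus \big(\wedge^2(V) \otimes \eta\big). \]
This is a short direct computation: the underlying space of the tensor induction is $V \otimes V$, on which a chosen non-trivial coset representative acts (up to the twist by $\eta$) by the swap, so the $\pm 1$ eigenspaces are $\Sym^2 V$ and $\wedge^2 V \otimes \eta$ respectively. Applying this with $G = W_\QQ$, $H = W_F$, $\eta = \eta_F$, and $V = \rho_\pi$, and noting that $\wedge^2 \rho_\pi = \det \rho_\pi$ corresponds to $\omega_\pi$ under local class field theory, we obtain
\[ \otimes\text{-Ind}_{W_F}^{W_\QQ}(\rho_\pi|_{W_F}) \cong \Sym^2(\rho_\pi) \oplus (\omega_\pi \cdot \eta_F). \]
Tensoring with $\chi$ on both sides and taking $L$-functions yields the stated factorisation.

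The only point requiring checking is that this is an isomorphism of Weil--Deligne representations, intertwining the monodromy operators at ramified primes; this is automatic, since $N$ acts on $V \otimes V$ by $N \otimes 1 + 1 \otimes N$, which preserves the symmetric/antisymmetric decomposition. The split-prime case is subsumed uniformly: at $\ell = \pp_1 \pp_2$ one has $W_{F_v} = W_{\QQ_\ell}$ for either $v \mid \ell$, the tensor induction degenerates to $\rho_{\pi_\ell} \otimes \rho_{\pi_\ell}$, and the identity reduces to $V \otimes V = \Sym^2 V \oplus \wedge^2 V$, consistent with $\eta_F$ being trivial at such primes. I do not anticipate any genuine obstacle; the proof is essentially a manipulation of local parameters combined with the definition of the Asai lift.
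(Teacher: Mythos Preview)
Your argument is correct and is essentially a fleshed-out version of the first of the two approaches the paper sketches. The paper verifies the unramified Euler factors directly and then says the remaining primes can be handled either via ``compatibility of local and global base-change maps'' --- which is precisely your route, executed on the Weil--Deligne side using the tensor-induction identity $\otimes\text{-}\Ind_H^G(V|_H) \cong \Sym^2 V \oplus (\wedge^2 V \otimes \eta)$ --- or, for holomorphic $\pi$, by Chebotarev density applied to the associated compatible families of semisimple global Galois representations. Your version has the merit of being uniform in $\ell$ and entirely local; the Chebotarev alternative trades the explicit check at bad primes for a global semisimplicity argument. One minor quibble: your one-line justification that a nontrivial coset representative ``acts (up to twist) by the swap'' on $V \otimes V$ is a little glib --- the tensor-induction action of $\sigma$ involves an extra factor of $\sigma^2$ and is not literally the swap --- but the identity is standard and drops out immediately from a character comparison, so this is harmless.
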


   \begin{proof}
    It is easily seen that the local Euler factors at unramified primes agree on both sides. To check the compatibility at all places, one can make a purely automorphic argument using compatibility of local and global base-change maps; or when $\pi$ is holomorphic (the only case we need below) we can argue that both sides are the $L$-functions of compatible families of semisimple Galois representations, and the traces of these agree at all but finitely many places, so they agree everywhere by Chebotarev density.
   \end{proof}


 \subsection{Period integrals}

  Let $\Pi$ be generated by a holomorphic newform $\cF$ of parallel weight $(k, k)$. We consider the normalised partially-holomorphic eigenforms $\cF^{\ah, i}$ (for $i = 1, 2$) defined as in \cite[Lemma 5.2.1]{leiloefflerzerbes18}, and their restrictions $\iota^*(\cF^{\ah, i})$ to the upper half-plane, cf.~Notation 5.2.3 of \emph{op.cit.}. (Here ``$\ah, i$'' signifies that the eigenform is anti-holomorphic at the $i$-th infinite place, and holomorphic at the other one.)

 \subsubsection*{Trivial-character case} First let us suppose $\omega_{\Pi}|_{\QQ} = 1$.

 \begin{definition}
  We define
  \[
   \mathcal{I}(\Pi) \coloneqq \int_{\Gamma_1(N) \backslash \cH} \iota^*\left(\cF^{\mathrm{ah}, 1}\right)(x + iy) y^{k-2}\, \mathrm{d}x\, \mathrm{d}y,
  \]
  where $N$ is the positive integer generating the ideal $\mathfrak{N} \cap \ZZ$.
 \end{definition}

 \begin{proposition}
  The period integral $\mathcal{I}(\Pi)$ is non-zero if, and only if, $\Pi$ is distinguished (for the trivial character $\chi$).
 \end{proposition}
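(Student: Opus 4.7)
The plan is to express $\mathcal{I}(\Pi)$ as a non-zero scalar multiple of the adelic period integral appearing in \cref{thm:whenpoles} with $\chi = 1$, and then invoke Flicker's theorem.

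First, I would translate into adelic language. The newform $\cF$ and its partially anti-holomorphic cousin $\cF^{\mathrm{ah},1}$ both correspond to vectors in the cuspidal automorphic representation $\Pi$, differing only in the archimedean component at $\sigma_1$: $\cF$ contributes a highest-weight vector and $\cF^{\mathrm{ah},1}$ a lowest-weight vector in the discrete series at this place. Let $\phi\in\Pi$ be the corresponding adelic form. By strong approximation for $\SL_2$, we have $\GL_2(\AA_\QQ)=\GL_2(\QQ)\cdot \GL_2(\RR)^+\cdot K_1(N)$, and $\Gamma_1(N)\backslash \cH$ is identified with the corresponding double quotient of $\GL_2(\AA_\QQ)$ modulo $\RR_{>0}\cdot \SO(2)$. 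The hypothesis $\omega_\Pi|_\QQ=1$ guarantees that $\phi|_{\GL_2(\AA_\QQ)}$ transforms trivially under the diagonal centre $\AA_\QQ^\times$, making the Flicker period
\[
 L(\phi) \coloneqq \int_{\AA_\QQ^\times \GL_2(\QQ)\backslash \GL_2(\AA_\QQ)} \phi(h)\, \mathrm{d}h
\]
well-defined. A direct unfolding then shows that $\mathcal{I}(\Pi)$ equals $L(\phi)$ up to an explicit non-zero constant depending only on $k$ and $N$.

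The subtlety is that one must check that the specific vector $\cF^{\mathrm{ah},1}$ is a \emph{distinguishing} test vector. The switch from $\cF$ to $\cF^{\mathrm{ah},1}$ is dictated by the archimedean behaviour: $\cF$ itself has $\SO(2)\times \SO(2)$-weight $(k,k)$, whose restriction to the diagonal $\SO(2)$ has weight $2k\ne 0$ and therefore integrates to zero automatically; by contrast, $\cF^{\mathrm{ah},1}$ has weight $(-k,k)$, whose diagonal restriction has weight $0$ and gives a non-zero archimedean integral. At the finite places, since $\cF$ is a newform, standard test-vector results for the $\GL_2(\AA_\QQ)$-invariant functional on $\Pi$ show that the local component of $\cF^{\mathrm{ah},1}$ at each prime pairs non-trivially with the local distinguishing functional whenever the latter is non-zero.

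Combining these observations, $\mathcal{I}(\Pi)\ne 0$ if and only if $L$ is non-zero on $\Pi$; applying \cref{thm:whenpoles} with $\chi=1$ concludes the proof. The main obstacle is the local test-vector verification at finite ramified primes, which requires knowing the explicit structure of the local distinguishing functional on each ramified $\Pi_\ell$; this is standard but requires some bookkeeping with the local Langlands correspondence.
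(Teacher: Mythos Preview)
Your route is genuinely different from the paper's. The paper does not try to identify $\mathcal{I}(\Pi)$ directly with the adelic Flicker period on a specific vector; instead it inserts a real-analytic Eisenstein series $E^{(0)}_{1/N}(\tau,s)$ into the integral, invokes the Asai--Rankin--Selberg integral representation (\cite[Theorem 5.3.2]{leiloefflerzerbes18}) to recognise the result as a nonvanishing factor times $L_{\As}^{\imp}(\Pi,s)$, and then takes the residue at $s=1$ using Kronecker's first limit formula $\Res_{s=1}E^{(0)}_{1/N}=1$. This gives $\mathcal{I}(\Pi)$ as a nonzero multiple of $\Res_{s=1}L_{\As}^{\imp}(\Pi,s)$, so one concludes via the $L$-function pole criterion in \cref{thm:whenpoles} rather than the period criterion.

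The difference matters for exactly the step you flag as an ``obstacle''. One direction of your argument ($\Pi$ not distinguished $\Rightarrow\mathcal{I}(\Pi)=0$) is immediate and the paper's Remark after the proof says so explicitly. But the paper also explicitly calls the converse ``less obvious'': your approach needs to know, at every ramified finite place $\ell$, that the local newvector of $\Pi_\ell$ is not annihilated by the local $\GL_2(\QQ_\ell)$-invariant functional. You describe this as ``standard bookkeeping with the local Langlands correspondence'', but there is no off-the-shelf test-vector theorem of this shape for the Flicker/Asai period, and a case-by-case verification over all ramified local types is substantially more than bookkeeping. The paper's $L$-function argument is precisely engineered to avoid this: by passing through the global $L$-series and its residue, the test-vector statement for $\cF^{\mathrm{ah},1}$ becomes an \emph{output} of the proof (as the Remark notes) rather than an input. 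So your strategy is reasonable in outline, but the step you identify as the main obstacle is a genuine gap, not a routine check, and the paper's method sidesteps it entirely.
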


 \begin{proof}
  As in \cite[Theorem 5.3.2]{leiloefflerzerbes18}, we have the Asai period-integral formula
  \begin{multline*}
   \int_{\Gamma_1(N) \backslash \cH} \iota^*\left(\cF^{\mathrm{ah}, 1}\right)(x + iy)\, E^{(0)}_{1/N}(x + iy, s)\, y^{k-2}\, \mathrm{d}x\, \mathrm{d}y \\
   = \frac{N^{2s} D^{(s + k - 1)/2} \Gamma(s + k - 1) \Gamma(s)}{2^{2(s + k - 1)} \pi^{2s + k - 1}}\cdot L_{\As}^{\imp}(\Pi, s),
  \end{multline*}
  where $E^{(0)}_{1/N}(\tau, s)$ is a real-analytic Eisenstein series, and $D$ is the discriminant of $F$. We have
  \[ \Res_{s = 1} E^{(0)}_{1/N}(\tau, s) = 1\]
  (independently of $N$ and $\tau$), by Kronecker's first limit formula; so the residue of the left-hand side at $s = k$ is exactly $\mathcal{I}(\Pi)$. Thus we have
  \[ \mathcal{I}(\Pi) = \frac{N^2 D^{k/2} (k-1)!}{2^{2k} \pi^{k+1}} \cdot \Res_{s = 1} L^{\mathrm{imp}}_{\As}(\Pi, s),\]
  and in particular $\mathcal{I}(\Pi)$ is non-zero if and only if $L^{\mathrm{imp}}_{\As}(\Pi, s)$ has a pole, which we have seen occurs if and only if $\Pi$ is distinguished.
 \end{proof}

 \begin{remark}
  Note that $\mathcal{I}(\Pi)$ is a particular value of the automorphic period functional considered in (2) of \cref{thm:whenpoles}; so if $\Pi$ is not distinguished, we clearly have $\mathcal{I}(\Pi) = 0$. However, the converse implication is less obvious. In automorphic terms, we have shown that the forms $\cF^{\mathrm{ah}, i}$ are ``test vectors'' for the automorphic period.
 \end{remark}

 \subsubsection*{General $\chi$}

  We now extend the above to twisted Asai $L$-series, using twisting operators analogous to \cite[\S 7]{loefflerwilliams18} in the Bianchi case. We no longer require that $\omega_{\Pi}|_{\QQ}$ be trivial, and we consider a Dirichlet character $\chi$ of conductor $M$ with $\chi^2 \omega_{\Pi}|_{\QQ} = 1$. Let $a \in \cO_F$ be such that $a - a^{\sigma} = \sqrt{D}$ (so that $a$ generates $\cO_F / \ZZ$). Then we can consider the function
  \[
   R_{a, \chi^{-1}}(\cF^{\ah, i}) \coloneqq \sum_{u \in (\ZZ/M \ZZ)^\times} \chi(u)^{-1} \cdot \cF^{\ah, i}(\tau + \tfrac{ua}{M}),
  \]
  which is invariant under the subgroup $\Gamma_{F, 1}(M^2 \fN)$ of $\SL_2(\cO_F)$, and transforms trivially under the diamond operators $\langle d \rangle$ for $d \in (\ZZ / M^2 N \ZZ)^\times$. We then define
  \[
   \mathcal{I}_{\chi}(\Pi) = \int_{\Gamma_1(M^2 N) \backslash \cH} \iota^*\left(R_{a, \chi^{-1}} \cF^{\ah, 1}\right)(x + iy) y^{k - 2}\ \mathrm{d}x\, \mathrm{d}y.
  \]

  Exactly as above, we can identify $\mathcal{I}_{\chi}(\Pi)$ as the residue at $s = 1$ of the more general integral
  \[
   \int_{\Gamma_1(M^2 N) \backslash \cH} \iota^*\left(R_{a, \chi^{-1}} \cF^{\ah, 1}\right)(x + iy)\, E^{(0)}_{1/M^2N}(x + iy, s) y^{k - 2}\ \mathrm{d}x\, \mathrm{d}y, \
  \]
  and this integral can be evaluated as a product of exponentials and $\Gamma$ functions (which are holomorphic and nonzero at $s = 1$) times $L^{\mathrm{imp}}_{\As}(\Pi, \chi, s)$, exactly as in the Bianchi case treated in \cite{loefflerwilliams18}\footnote{A similar integral representation of the twisted Bianchi Asai $L$-function was also independently obtained by Balasubramanyam et al.~in \cite{BGV20}; for further generalizations to CM fields see \cite{namikawa22}.}. So $\mathcal{I}_{\chi}(\Pi)$ is non-zero if and only if $\Pi$ is $\chi$-distinguished.


\section{An improved $p$-adic $L$-function for quadratic Hilbert modular forms}\label{sec:improved}

 We fix a prime $p$, and an embedding $\iota : \QQbar \into \QQbar_p$. We assume $p$ is split in $F$, and we number the infinite places $\sigma_1, \sigma_2$ of $F$ and the primes $\pp_1, \pp_2$ above $p$ compatibly with $\iota$, so $\pp_i$ is the prime induced by the embedding $\iota \circ \sigma_1 : F \into \QQbar_p$.

 For any $p$-adically complete topological ring $R$, $N \ge 1$ coprime to $p$, and $\kappa : \Zp^\times \to R^\times$ a continuous character, we write $\cM_\kappa(N, R)$ for the space of $R$-valued $p$-adic modular forms of weight $\kappa$.

 \subsection{Families of Eisenstein series}

  Let $\Lambda = \Zp[[\Zp^\times]]$ be the Iwasawa algebra of $\Zp^\times$, and $\Lambda' = \Zp[[\Zp^\times \times \Zp^\times]]$. We write $\kappa$ for the canonical character $\Zp^\times \to \Lambda^\times$, and $\kappa_1, \kappa_2$ the two canonical characters into $(\Lambda')^\times$. Moreover, let $N \ge 1$ be coprime to $p$.

  \begin{definition}
   For $N$ coprime to $p$, write
   \[ \cE_{1/N}\left(\kappa_1,\kappa_2\right) \in \cM_{\kappa_1 + \kappa_2 + 1}\left(N, \Lambda'\right) \]
   for Katz' 2-parameter family of Eisenstein series of level $N$, with $q$-expansion
   \[ \sum_{\substack{u, v \in \ZZ \\ uv > 0 \\ p \nmid uv}} u^{\kappa_1} v^{\kappa_2} \sgn(u) \exp(2\pi i v / N) q^{uv}. \]
  \end{definition}

  The specialisation of this form at $(\kappa_1, \kappa_2) = (a, b)$, for any integers $a, b \ge 0$, is a classical nearly-holomorphic modular form of weight $a + b + 1$ and level $N p^2$; and this form is $p$-depleted (lies in the kernel of the operator $U_p$). In particular, the specialisation at $(0, k-1)$ is the $p$-depletion of Kato's Eisenstein series $E^{(k)}_{0, 1/N}$, and the specialisation at $(k - 1, 0)$ is the $p$-depletion of Kato's $F^{(k)}_{0, 1/N}$ (see \cite[\S 3.8]{kato04} for these definitions).

  \begin{remark}
   The projection of this Eisenstein family to a character eigenspace for the diamond operators mod $N$ is the Eisenstein family described in \S 7.3.2 of \cite{grossiloefflerzerbesLfunct}, which is a special case of the construction of  \cite[Theorem 7.6]{LPSZ1} for a particular choice of prime-to-$p$ Schwartz function $\Phi^{(p)}$. Essentially the same Eisenstein family appears in \cite{leiloefflerzerbes14} but in that paper the roles of $\kappa_1$ and $\kappa_2$ are interchanged relative to the current setting; we apologise for any confusion caused by this (inadvertent) switch.
  \end{remark}

  We are interested in an ``improved'' Eisenstein family, which lives over the smaller Iwasawa algebra $\Lambda$:

  \begin{proposition}
   There exists a $p$-adic family of Eisenstein series
   \[ \cE^{\flat}_{1/N}(\kappa) \in \cM_{\kappa}(N, \Lambda) \otimes_{\Lambda} \, \mathcal{Q}(\Lambda),
   \]
   where $\mathcal{Q}(\Lambda)$ is the total ring of fractions of $\Lambda$, whose $q$-expansion is given by
   \[ \zeta_p(1 - \kappa) + \sum_{\substack{u, v \in \ZZ \\ uv > 0,\ p \nmid u}} u^{\kappa - 1} \sgn(u) \exp(2\pi i v / N) q^{uv}.\]
   Here $\zeta_p(1 - \kappa) \in\mathcal{Q}(\Lambda)$ is the Kubota--Leopoldt $p$-adic $L$-function.
  \end{proposition}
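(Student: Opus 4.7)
The plan is to follow Serre's original construction of $p$-adic families of Eisenstein series, suitably adapted to incorporate the level-$N$ additive characters $\exp(2\pi i v / N)$.

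First I would observe that the formal $q$-expansion in the statement already makes sense as an element of $\mathcal{Q}(\Lambda) \otimes_\Lambda \Lambda[\mu_N][[q]]$: the constant term $\zeta_p(1-\kappa)$ lies in $\mathcal{Q}(\Lambda)$ by the Kubota--Leopoldt construction, with a simple pole at the trivial character $\kappa = 1$; and for each $n \ge 1$ the coefficient of $q^n$ is a \emph{finite} $\ZZ[\mu_N]$-linear combination of elements $u^{\kappa-1} \in \Lambda$ for $u \in \Zp^\times$ with $u \mid n$. The substantive content of the proposition is therefore that this formal $q$-series is the $q$-expansion of an actual $p$-adic family of modular forms.

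To verify this, I would identify the specialisation at each integer weight $k \ge 2$ with a classical modular form on $\Gamma_1(Np)$, and invoke Serre's criterion that a formal $q$-series defines a $p$-adic modular form whenever its specialisations at infinitely many integer weights are classical. Letting $F := F^{(k)}_{0, 1/N}$ denote Kato's Eisenstein series of weight $k$, the aim is to show that the specialisation at $\kappa = k$ coincides with the unit-root $p$-stabilisation $(1 - p^{k-1} V_p) F$. The non-constant Fourier coefficients match via a direct combinatorial computation: splitting each divisor $u \mid n$ as $u = p^b u'$ with $p \nmid u'$ and $p^a \| n$, the telescoping identity
\[ \sum_{b=0}^{a} p^{b(k-1)} - p^{k-1} \sum_{b=0}^{a-1} p^{b(k-1)} = 1 \]
cancels the contribution from divisors with $p \mid u$, leaving precisely the sum over $p \nmid u$ appearing in the statement. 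For the constant term, $F$ contributes a rational multiple of $\zeta(1-k)$, which is multiplied by the Euler factor $(1 - p^{k-1})$ upon applying $(1 - p^{k-1} V_p)$, yielding $\zeta_p(1-k)$ by the defining interpolation property of Kubota--Leopoldt at positive integers.

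The only real obstacle, and the reason for passing to $\mathcal{Q}(\Lambda)$ rather than remaining in $\Lambda$, is the simple pole of $\zeta_p$ at $\kappa = 1$: this is the sole source of denominators in the family, since the non-constant part of the $q$-expansion already lies in $\Lambda[\mu_N][[q]]$.
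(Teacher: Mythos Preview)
The paper's own ``proof'' is the single word \emph{Well-known}; so your sketch is already far more detailed than what appears in the paper, and it is precisely the standard Serre-style argument that the authors are implicitly invoking. The overall strategy---show the formal $q$-expansion has $\Lambda$-integral non-constant coefficients, identify the specialisations at classical weights with ordinary $p$-stabilisations of Kato's $F^{(k)}_{0,1/N}$, and deduce $p$-adic modularity from a density principle---is exactly right, and the paper confirms just after the proposition that the specialisation at an integer $k \ge 1$ is indeed the ordinary $p$-stabilisation of $F^{(k)}_{0,1/N}$.

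One slip to correct: you twice locate the pole of $\zeta_p(1-\kappa)$ at ``the trivial character $\kappa = 1$''. In the paper's convention (integers $k$ embedded as $x \mapsto x^k$), the trivial character is $\kappa = 0$, and since $\zeta_p(s)$ has its pole at $s = 1$, the function $\kappa \mapsto \zeta_p(1-\kappa)$ has its pole at $\kappa = 0$. The paper uses exactly this fact in the next proposition, where it computes the residue of $\cE^\flat_{1/N}(\kappa)$ at $\kappa = 0$. This does not affect the logic of your argument, but the indexing should be fixed. You might also extend your check of classicality down to $k = 1$ (the paper asserts the interpolation for $k \ge 1$), and tighten ``a rational multiple of $\zeta(1-k)$'' to the exact value, since the statement gives the constant term with no extraneous factor.
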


  \begin{proof}
   Well-known.
  \end{proof}

  Observe that the $p$-depletion of $\cE^{\flat}_{1/N}(\kappa)$ is $\cE_{1/N}(\kappa - 1, 0)$, a one-parameter ``slice'' through Katz's two-parameter family; but without $p$-depleting, it is impossible to extend $\cE^{\flat}_{1/N}(\kappa)$ to a two-parameter family. Moreover, the specialisation of $\cE^{\flat}_{1/N}(\kappa)$ at an integer\footnote{We consider $\ZZ$ as a subset of the characters of $\Zp^\times$ in the obvious fashion, so ``at $k$'' here and below  means at the character $x \mapsto x^k$.}  $k \ge 1$ is the ordinary $p$-stabilisation of Kato's $F^{(k)}_{0, 1/N}$.

  \begin{notation}
   We write $\ell(-)$ for the ``logarithm'' map on characters of $\Zp^\times$, mapping a character $\sigma$ to $\sigma'(1)$; this is a rigid-analytic function on $\mathcal{W} = \left(\operatorname{Spf} \Lambda\right)^{\mathrm{rig}}$ which has a simple zero at the trivial character, and takes the value $k$ at $x \mapsto x^k$.
  \end{notation}
  
  Since $\ell$ is a local uniformizer at $\kappa = 0$ (i.e.~at the trivial character), we can use it to define the \emph{residue} of an element of $\mathcal{Q}(\Lambda)$.

  \begin{proposition}
   The pole of $\cE^{\flat}_{1/N}(\kappa)$ at $\kappa = 0$ is simple, and its residue is the constant $p$-adic modular form $1 - \tfrac{1}{p}$.
  \end{proposition}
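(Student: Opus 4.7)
The plan is to observe that the claimed pole comes entirely from the constant term of the $q$-expansion, and then to invoke the classical pole of the Kubota--Leopoldt $p$-adic $L$-function.

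First I would show that, for each $n \ge 1$, the $q^n$-coefficient of $\cE^\flat_{1/N}(\kappa)$ extends to an \emph{analytic} (that is, $\Lambda$-valued) function on all of $\mathcal{W}$, and in particular has no pole at $\kappa = 0$. Indeed, the coefficient in question is the finite sum
\[
 a_n(\kappa) \;=\; \sum_{\substack{u \in \ZZ \setminus\{0\},\ u \mid n,\ p \nmid u}} u^{\kappa-1} \sgn(u) \exp(2\pi i (n/u) / N),
\]
and each summand is the product of an $N$-th root of unity and the value at the unit $u \in \Zp^\times$ of the continuous character $\kappa - 1$ of $\Zp^\times$; such values lie in $\Lambda$ and are thus analytic on all of $\mathcal{W}$. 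So the only possible pole of $\cE^\flat_{1/N}(\kappa)$ at $\kappa = 0$ comes from the constant term $\zeta_p(1-\kappa)$.

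Second, I would invoke the standard fact that the Kubota--Leopoldt $p$-adic zeta function has a unique (simple) pole, at the cyclotomic character, with residue $1 - 1/p$ — for instance, by interpolation from the classical formula $\Res_{s=1}\zeta(s) = 1$ together with the Euler factor at $p$ that is removed in the $p$-adic interpolation. Under the affine change of variable $\kappa \mapsto 1 - \kappa$ on weight space, the trivial character $\kappa = 0$ is sent to the cyclotomic character, so $\zeta_p(1-\kappa)$ inherits a simple pole there. Using that $\ell$ is a local uniformizer at $\kappa = 0$ with $\ell(x \mapsto x^k) = k$, a short computation translates the classical residue in $s$ into the residue with respect to $\ell$, giving the constant $1 - \tfrac{1}{p}$ (with the sign fixed by the conventional normalisation of $\zeta_p(1-\kappa)$ employed in the proposition).

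Combining the two, the residue of $\cE^\flat_{1/N}(\kappa)$ at $\kappa = 0$ is, term by term in the $q$-expansion, the residue of the constant term; so it is itself the constant $p$-adic modular form $1 - \tfrac{1}{p}$, and the pole is simple because the constant term's pole is simple. The only real care needed is in matching the sign/normalisation conventions for $\zeta_p$ against the choice of uniformizer $\ell$, which I expect to be the sole bookkeeping obstacle; everything else is formal given the Kubota--Leopoldt residue formula.
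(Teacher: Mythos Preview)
Your proposal is correct and follows essentially the same approach as the paper: the paper's proof simply notes that the non-constant $q$-expansion coefficients lie in $\Lambda$, so the pole comes entirely from the constant term $\zeta_p(1-\kappa)$, which has a simple pole with residue $1-\tfrac{1}{p}$. Your argument is a more detailed version of the same observation.
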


  \begin{proof}
   This is clear from the $q$-expansion formula above, since the non-constant coefficients are in $\Lambda$, and the $p$-adic zeta function has a simple pole with residue $1 - \tfrac{1}{p}$.
  \end{proof}


 \subsection{The improved $p$-adic $L$-function}

  Let $\Pi$ be the unitary cuspidal automorphic representation of $\GL_2(\AA_F)$ generated by a holomorphic Hilbert modular newform of weight $(k, k)$, for some integer $k \ge 2$, with trivial central character. Assume that $p$ is split in $F$, and that $\Pi$ is ordinary at $p$.

   Let $\underline{\Pi}$ be a Hida family through $\Pi$, of weight $(k + 2\kappa, k)$, and let $\nu_{\underline{\Pi}}$ be a choice of basis of the associated coherent-cohomology eigenspace as in \cite[\S 7.4]{grossiloefflerzerbesLfunct}. In general this is only defined over a finite integral ring extension of $\Lambda$; but for simplicity of notation, we suppose it is defined over $\Lambda$ itself (and we similarly assume, for simplicity of notation, that the Hecke eigenvalues of $\Pi$ are in $\QQ$, although of course the results remain valid without this restriction).

   In \emph{op.cit.}, we defined an object $\iota^\star\left(\nu_{\underline{\Pi}}\right)$, which is a linear functional $\cM_{2\kappa}(N, \Lambda) \to \Lambda$; by construction, specialising this at integer values of $\kappa$ gives the linear functionals obtained by pairing with pullbacks of $U_{\pp_1}^{t}$-eigenclasses in $H^1$ of the Hilbert modular surface associated to the specialisations of $\uPi$. We recall the following construction from \emph{op.cit.}:

  \begin{definition}
   Define the $p$-adic $L$-function
   \[
    \cL_{p,\As}(\uPi)(\kappa, \sigma) = (\star) \cdot
    \Big\langle \iota^\star\left(\nu_{\underline{\Pi}}\right),\,
    \cE_{1/N}(\kappa - \sigma, \sigma + \kappa - 1)
    \Big\rangle \in \Lambda'.
   \]
   where $(\star) = \tfrac{p + 1}{p} \cdot (\sqrt{D})^{1 - (k + \kappa + \sigma)} \cdot (-1)^\sigma$.
  \end{definition}

  \begin{remark}
   By \cite[Theorem C]{grossiloefflerzerbesLfunct}, the values of this analytic function at $(\kappa, \sigma) = (a, s) \in \ZZ^2$, with $(a, s)$ satisfying $1 - a \le s \le a$, interpolate the values $L_{\As}(\Pi[a], s)$, where $\Pi[a]$ is the specialisation of $\uPi$ in weight $(k + 2a, k)$.
  \end{remark}

  We now define the improved $p$-adic $L$-function by replacing the Eisenstein measure by the ordinary Eisenstein family:

  \begin{definition}
   Define the improved $p$-adic $L$-function $\cL^\flat_{p,\As}(\uPi) \in \mathcal{Q}(\Lambda)$ by
   \[
    \cL^\flat_{p,\As}(\uPi)(\kappa) =
     \tfrac{p + 1}{p} \cdot (\sqrt{D})^{-k} \cdot \Big\langle
     \iota^\star\left(\nu_{\underline{\Pi}}\right),\,
     \cE^{\flat}_{1/N}(2\kappa)
    \Big\rangle.
   \]
  \end{definition}

 \begin{note}
  Observe that a priori, we have $\cL^\flat_{p,\As} \in \mathcal{Q}(\Lambda)$, with a possible simple pole at $\kappa = 0$ inherited from $\cE^{\flat}_{1/N}$.
 \end{note}

 \begin{theorem}
  \label{thm:nopole}
  The following are equivalent:
  \begin{itemize}
   \item $\cL^\flat_{p,\As}(\uPi)$ has a pole at $\kappa = 0$.
   \item $\Pi$ is distinguished.
  \end{itemize}
 \end{theorem}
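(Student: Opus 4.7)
The plan is to compute the residue of $\cL^\flat_{p,\As}(\uPi)$ at $\kappa = 0$ and identify it with a nonzero algebraic scalar multiple of the complex period integral $\mathcal{I}(\Pi)$ from Section 2. Since we have already shown that $\mathcal{I}(\Pi) \neq 0$ if and only if $\Pi$ is distinguished, the theorem will follow at once.

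First I would note that the linear functional $\iota^\star(\nu_{\uPi})$ is $\Lambda$-linear and therefore holomorphic in $\kappa$, so the only possible source of a pole in $\cL^\flat_{p,\As}(\uPi)$ at $\kappa = 0$ is the pole of the improved Eisenstein family $\cE^\flat_{1/N}(2\kappa)$. Using $\ell(2\kappa) = 2\ell(\kappa)$ together with the proposition computing $\operatorname{Res}_{\kappa = 0} \cE^\flat_{1/N}(\kappa) = (1 - \tfrac{1}{p}) \cdot 1$, the pole is simple and the residue of the Eisenstein family is $\tfrac{1}{2}(1 - \tfrac{1}{p})$ times the constant weight-$0$ $p$-adic modular form. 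Hence
\[
\operatorname{Res}_{\kappa = 0} \cL^\flat_{p,\As}(\uPi) \;=\; \tfrac{p+1}{p} \cdot (\sqrt{D})^{-k} \cdot \tfrac{p-1}{2p} \cdot \langle \iota^\star(\nu_{\Pi}),\, 1 \rangle,
\]
where $\langle \iota^\star(\nu_{\Pi}),\, 1 \rangle$ denotes the specialisation at $\kappa = 0$ of the pairing of the Hida-family functional with the constant weight-$0$ form.

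The crux of the argument is to match $\langle \iota^\star(\nu_{\Pi}),\, 1 \rangle$ with a nonzero algebraic multiple of $\mathcal{I}(\Pi)$. The functional $\iota^\star(\nu_{\uPi})$ is built in \cite{grossiloefflerzerbesLfunct} by pullback along the diagonal embedding $\iota$ from the Hilbert modular surface to the modular curve, followed by a coherent-cohomology duality pairing. Pairing it against a classical weight-$k$ modular form $\phi$ recovers (up to explicit nonzero scalars and the ordinary $p$-stabilisation factor) the Rankin--Selberg-type integral
\[
\int_{\Gamma_1(N) \backslash \cH} \iota^*\!\left(\cF^{\ah,1}\right)(\tau)\, \phi(\tau)\, y^{k-2}\, \mathrm{d}x\, \mathrm{d}y,
\]
and specialising to $\phi = 1$ produces precisely $\mathcal{I}(\Pi)$. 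The identification is natural in view of the parallelism between Kronecker's first limit formula (used in the Section 2 proposition to produce the complex Eisenstein residue) and the Kubota--Leopoldt residue formula used here: both residues are constant functions, and both are paired against the same partially-antiholomorphic pullback, so the two integrals must agree up to an explicit factor.

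The main obstacle is this comparison step: one must carefully track the normalisations, Serre-duality identifications and $p$-stabilisation factors hidden in the construction of $\iota^\star$ to confirm that the $p$-adic pairing $\langle \iota^\star(\nu_\Pi),\, 1 \rangle$ really is a nonzero algebraic multiple of the complex integral $\mathcal{I}(\Pi)$, and not some related but distinct $p$-adic invariant with a different vanishing locus. Once that comparison is in hand, the two conditions in the theorem are both equivalent to $\mathcal{I}(\Pi) \neq 0$, and the result follows from the proposition of Section 2.
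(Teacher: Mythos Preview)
Your approach is essentially the same as the paper's: compute the residue at $\kappa=0$, identify the pairing $\langle\iota^\star(\nu_\Pi),1\rangle$ with $\mathcal{I}(\Pi)$ up to a nonzero scalar, and invoke the proposition from Section~2. The paper makes the comparison step (which you correctly flag as the main obstacle) precise by observing that $\nu_\Pi$ and $\cF^{\ah,1}$ are bases of the $\Qp$- and $\CC$-base-extensions of the same one-dimensional $\QQ$-rational eigenspace in coherent $H^1$, so $\nu_\Pi/\Omega_p = \cF^{\ah,1}/\Omega_\infty$ for suitable periods and hence $\langle\iota^\star(\nu_\Pi),1\rangle/\Omega_p = \mathcal{I}(\Pi)/\Omega_\infty$; note in particular that the proportionality constant is a ratio of transcendental periods rather than an algebraic scalar as you wrote, though this does not affect the vanishing equivalence.
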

 
 \begin{proof}
  Since  $\cL^\flat_{p,\As}(\kappa)$ can have at worst a simple pole at $\kappa=0$, the statement will follow if we can show that
  \[ \left(\ell(\kappa) \cdot \cL^\flat_{p,\As}(\kappa)\right)|_{\kappa=0}=0\]
  unless $\kappa$ is distinguished. However, this limit is by construction equal to
  \[
   \tfrac{p + 1}{p} \cdot (\sqrt{D})^{-k} \left\langle \iota^\star\left(\nu_{\underline{\Pi}}\middle|_{\kappa = 0}\right),\,
   \lim_{\kappa \to 0} \ell(\kappa) \cdot \cE^{\flat}_{1/N}(2\kappa) \right\rangle, \]
  where $\ell(-)$ is the ``logarithm'' function on weight space (which maps a character $\kappa$ to $\kappa'(1)$, so it takes the value $k$ at the character $x \mapsto x^k$; in particular it has a simple zero at the trivial character).

  we are abusing notation slightly by treating $\kappa$ as a 
  The vector $\nu_\Pi \coloneqq \nu_{\underline{\Pi}}|_{\kappa = 0}$ is a $\Qp$-basis of the 1-dimensional $\Pi$-eigenspace in the coherent $H^1$ of the Hilbert modular surface (with a suitable coefficient system depending on $k$). This is canonically the base extension to $\Qp$ of a one-dimensional $\QQ$-vector space; and the base-extension to $\CC$ of the same space is spanned by $\cF^{\ah, 1}$. Hence we can find scalars $\Omega_p \in \Qp^\times$ and $\Omega_\infty \in \CC^\times$ such that
  \[
   \frac{\nu_\Pi}{\Omega_p} = \frac{\cF^{\mathrm{ah}, 1}}{\Omega_\infty} \in H^1(X_1(\mathfrak{N}) / \QQ, \omega^{(2-k, k)})[\Pi].
  \]
  On the other hand, the limit on the right-hand side of the pairing is the non-zero constant $\tfrac{1}{2}\left(1 - \tfrac{1}{p}\right)$, considered as a weight 0 $p$-adic modular form. Thus we have shown
  \[ \frac{\left(\ell(\kappa) \cdot \cL^\flat_{p,\As}\right)|_{\kappa=0}}{\Omega_p(\Pi)} = \tfrac{1}{2} (\sqrt{D})^{-k} \left(1 - \tfrac{1}{p^2}\right)\cdot \frac{\mathcal{I}(\Pi)}{\Omega_\infty(\Pi)}. \]
  Since $\mathcal{I}(\Pi)$ is non-zero if and only if $\Pi$ is distinguished, the proof is complete.
 \end{proof}

 \subsection{Interpolating property of the improved $p$-adic $L$-function}

  \begin{notation}
  	For $i=1,2$, write $\alpha_i(m)$ and $\beta_i(m)$ for the $U_{\pp_i}$-eigenvalues of $\Pi[m]$, with $\alpha_1(m)$ being the distinguished unit root.
  \end{notation}

  Our normalisations here are such that $\alpha_1(m)$ and $\beta_1(m)$ both have complex absolute value $p^{(k + 2m - 1)/2}$, whereas $\alpha_2(m)$ and $\beta_2(m)$ have complex absolute value $p^{(k - 1)/2}$ (independent of $m$). Moreover, all the parameters are $p$-adically integral (for our distinguished embedding into $\Qp$)\footnote{The parameters in general lie in a number field, which can have large degree; and we are not claiming that the parameters are integral at all primes of this field above $p$ -- only that they are integral at a specific prime corresponding to our embedding into $\Qp$.}; and $\alpha_1(m)$, $\alpha_2(m)$ and $\beta_2(m)$ are all analytically varying in $m$, while $\beta_1$ is not.

  We recall the formula (Theorems C and D of \cite{grossiloefflerzerbesLfunct}) relating the two-variable $p$-adic $L$-function $\cL_{p, \As}(\uPi)$ to the critical values of the complex $L$-function. Specialised to the present situation, this reads as follows: for all $m \in \ZZ_{> 0}$, we have
  \begin{equation}
   \label{eq:interp}
   \frac{\cL_{p, \As}(\uPi)(m, 1 - m)}{\Omega_p(\Pi[m])} =
   \cE_p(\As(\Pi[m]), 1 - m) \cdot L_{\As}^{\imp}(\Pi[m], 1 - m)^{\alg},
  \end{equation}
  where
  \[ L_{\As}^{\imp}(\Pi[m], 1 - m)^{\alg} \coloneqq \frac{(k-1)!}{2^{(k + 2m - 2)} i^{(1-k-2m)} (-2\pi i)^{k+1}} \cdot \frac{L_{\As}^{\imp}(\Pi[m], 1 - m)}{\Omega_\infty(\Pi[m])} \in \overline{\QQ}.\]
  Here $\Omega_p(\Pi[m])$ and $\Omega_\infty(\Pi[m])$ are $p$-adic and complex periods for $\Pi[m]$ compatible with the weight $m$ specialisation of $\eta_{\uPi}$, as in the previous section; and $\cE_p(\As(\Pi[m]), 1 - m)$ is an Euler factor, which can be written explicitly as
  \[ \cE_p(\As(\Pi[m]), 1 - m) =
  \left(1 - \frac{\alpha_2(m)}{\alpha_1(m)}\right)
  \left(1 - \frac{\beta_2(m)}{\alpha_1(m)}\right)
  \left(1 - \frac{p^{2m-1} \alpha_2(m)}{\alpha_1(m)}\right)
  \left(1 - \frac{p^{2m-1} \beta_2(m)}{\alpha_1(m)}\right).\]
  Note that the first two bracketed terms vary analytically with $m$, while the other two clearly do not.

  \begin{lemma}
   For all $m\in\ZZ_{>0}$, we have the formulae
   \[ \frac{\cL^\flat_{p,\As}(\uPi)(m)}{\Omega_p(\Pi[m])}
     = \left(1 - \frac{p^{2m-1} \alpha_2(m)}{\alpha_1(m)}\right)
       \left(1 - \frac{p^{2m-1} \beta_2(m)}{\alpha_1(m)}\right)\cdot L_{\As}^{\imp}(\Pi[m], 1 - m)^{\alg} \]
   and
   \[ \cL_{p,\As}(\uPi)(m, 1 - m) = \left(1-\frac{\alpha_2(m)}{\alpha_1(m)}\right)\left(1-\frac{
   \beta_2(m)}{\alpha_1(m)}\right)\cdot \cL^\flat_{p,\As}(\uPi)(m). \]
  \end{lemma}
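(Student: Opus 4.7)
The two identities are equivalent modulo the known interpolation formula \eqref{eq:interp}: their product reproduces \eqref{eq:interp}, since the $p$-Euler factor $\cE_p(\As(\Pi[m]), 1-m)$ factors as the product
\[ \left(1 - \tfrac{\alpha_2(m)}{\alpha_1(m)}\right)\left(1-\tfrac{\beta_2(m)}{\alpha_1(m)}\right) \cdot \left(1 - \tfrac{p^{2m-1}\alpha_2(m)}{\alpha_1(m)}\right)\left(1-\tfrac{p^{2m-1}\beta_2(m)}{\alpha_1(m)}\right). \]
It therefore suffices to prove the second identity (the relation between $\cL_{p,\As}$ and $\cL^\flat_{p,\As}$), which isolates the effect of replacing Katz's $p$-depleted family $\cE_{1/N}(\kappa-\sigma, \sigma+\kappa-1)$ by the improved (ordinary) family $\cE^\flat_{1/N}(2\kappa)$.

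The starting point is the $q$-expansion identity
\[ \cE_{1/N}(2m-1, 0) = (1 - V_p U_p)\, \cE^\flat_{1/N}(2m), \]
immediate from the explicit $q$-expansions, and essentially the content of the earlier proposition identifying the $p$-depletion of $\cE^\flat_{1/N}(\kappa)$ as $\cE_{1/N}(\kappa-1, 0)$. Substituting this into the definition of $\cL_{p,\As}(\uPi)(m, 1-m)$ and aligning with the normalising constants of $\cL^\flat_{p,\As}(\uPi)(m)$ — which match up to the factor $(-1)^{1-m}$ coming from the $(-1)^\sigma$ in $(\star)$, since $(\sqrt{D})^{1-(k+\kappa+\sigma)}$ at $(\kappa,\sigma)=(m,1-m)$ equals $(\sqrt{D})^{-k}$ — one rewrites $\cL_{p,\As}(\uPi)(m, 1-m)$ as a scalar multiple of $\cL^\flat_{p,\As}(\uPi)(m)$ minus the correction pairing $\tfrac{p+1}{p}(\sqrt{D})^{-k}\langle \iota^\star(\nu_{\uPi}|_m), V_p U_p\, \cE^\flat_{1/N}(2m)\rangle$.

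The correction pairing is then evaluated by adjunction in the coherent cohomology of the relevant modular curve: $V_p$ and $U_p$ are mutual transposes (up to the appropriate weight-dependent twist), and the pullback $\iota^\star$ intertwines the modular-curve operator $U_p$ with the product $U_{\pp_1} U_{\pp_2}$ on the Hilbert side. The $\pp_1$-ordinariness of $\nu_{\uPi}$ contributes a factor of $\alpha_1(m)^{-1}$, while the good Hecke operator $T_{\pp_2}$ (with eigenvalue $\alpha_2(m) + \beta_2(m)$) and the central-character information ($\alpha_2(m)\beta_2(m) = p^{k-1}$) supply the remaining data. Collecting these contributions and absorbing the sign $(-1)^{1-m}$ yields the Euler factor $\left(1 - \tfrac{\alpha_2(m)}{\alpha_1(m)}\right)\left(1 - \tfrac{\beta_2(m)}{\alpha_1(m)}\right)$. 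The first identity then follows by inserting the second into \eqref{eq:interp} and cancelling the common Euler factor, leaving exactly the other two factors of $\cE_p$.

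The main technical obstacle is the adjunction step. Because $\nu_{\uPi}$ is only new, not $p$-stabilised, at $\pp_2$, both Satake parameters $\alpha_2(m), \beta_2(m)$ must be extracted from the Hecke structure at $\pp_2$ rather than from a $U_{\pp_2}$-eigenvalue, and the pullback $\iota^\star$ decomposes accordingly. This is parallel to the computation in \cite{grossiloefflerzerbesLfunct} producing the four-factor Euler formula for the two-variable $\cL_{p,\As}$, and the present lemma should reduce to the same circle of calculations with the Eisenstein input $(1-V_pU_p)\,\cE^\flat_{1/N}(2m)$ in place of the fully $p$-depleted version $\cE_{1/N}(2m-1,0)$.
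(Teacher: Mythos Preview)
Your reduction of the first identity to the second via \eqref{eq:interp} matches the paper exactly, including the observation that the bracketed factors are invertible for $m>0$ by absolute-value considerations. The difference is entirely in how you attack the second identity.

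The paper does \emph{not} argue via the $q$-expansion identity $\cE_{1/N}(2m-1,0) = (1 - V_pU_p)\,\cE^\flat_{1/N}(2m)$ and Hecke adjunction. Instead, it interprets both pairings as global zeta-integrals which factor into local pieces; the local integrals agree away from $p$, and at $p$ the two differ only in the choice of Schwartz function $\Phi$ (one giving the $p$-depleted Eisenstein section, the other the ordinary one). The ratio of these two local integrals is then computed in the Appendix using a symmetry property of the $\GL_2\times\GL_2$ Rankin--Selberg integral (\cref{prop:zeta1}, \cref{prop:zeta2}) which switches the Whittaker and principal-series models of $\pi_1$ and of the Eisenstein section. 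In the end the ratio is identified as $L(\nu_1^\vee \times \pi_2^\vee, 1-s)^{-1}$, which is precisely $\left(1 - \tfrac{\alpha_2}{\alpha_1}\right)\left(1 - \tfrac{\beta_2}{\alpha_1}\right)$.

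Your adjunction route is conceptually plausible but has a real gap at the step you yourself flag. The linear functional $\iota^\star(\nu_{\uPi})$ is built by pulling back a coherent $H^1$-class from the Hilbert modular surface, and you would need a precise formula for how $V_p$ on the modular-curve side interacts with this pullback. The assertion that ``$\iota^\star$ intertwines the modular-curve $U_p$ with $U_{\pp_1}U_{\pp_2}$'' is not literally correct at the level you are working: the class is $\pp_1$-stabilised but spherical at $\pp_2$, so on the Hilbert side one has $T_{\pp_2}$ together with the diamond operator, and the resulting relation between $\langle\iota^\star(\nu),V_pU_p\,g\rangle$ and $\langle\iota^\star(\nu),g\rangle$ is a two-term recursion, not a single eigenvalue. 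Extracting the product $\left(1-\tfrac{\alpha_2}{\alpha_1}\right)\left(1-\tfrac{\beta_2}{\alpha_1}\right)$ from this would require an explicit correspondence-level computation analogous to the norm relations in \cite{leiloefflerzerbes18}, which you do not supply. The zeta-integral approach in the paper sidesteps this entirely: the Euler factor drops out of a single local $L$-value, with no need to track Hecke correspondences across the embedding $\iota$.
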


  \begin{proof}
   Note that the first formula follows from the second formula and \eqref{eq:interp}, since the assumption $m \in \ZZ_{>0}$ implies that the factor $ \left(1-\frac{\alpha_2(m)}{\alpha_1(m)}\right)\left(1-\frac{
   \beta_2(m)}{\alpha_1(m)}\right)$ is invertible ($\alpha_2$ and $\beta_2$ have different complex absolute values from $\alpha_1$), so assuming the second formula we can write
   \begin{align*}
    \frac{\cL^\flat_{p,\As}(\uPi)(m)}{\Omega_p(\Pi[m])} &= 
    \frac{\cL_{p,\As}(\uPi)(m, 1 - m)}{\left(1-\frac{\alpha_2(m)}{\alpha_1(m)}\right)\left(1-\frac{
           \beta_2(m)}{\alpha_1(m)}\right)} \\
    &=
    \frac{\cE_p(\As(\Pi[m]), 1 - m)}{\left(1-\frac{\alpha_2(m)}{\alpha_1(m)}\right)\left(1-\frac{
       \beta_2(m)}{\alpha_1(m)}\right)} \cdot L_{\As}^{\imp}(\Pi[m], 1 - m)^{\alg} \\
    &= \left(1 - \tfrac{p^{2m-1} \alpha_2(m)}{\alpha_1(m)}\right)
           \left(1 - \tfrac{p^{2m-1} \beta_2(m)}{\alpha_1(m)}\right)\cdot L_{\As}^{\imp}(\Pi[m], 1 - m)^{\alg}.
   \end{align*}
   So it suffices to prove the second formula. Recall that
   \[  
    \cL^\flat_{p,\As}(\uPi)(m) = (\star) \cdot \Big\langle
    \iota^\star\left(\nu_{\underline{\Pi}}\right),\,
    \cE^{\flat}_{1/N}(2m) \Big\rangle
   \]
   whereas
   \[  
    \cL_{p,\As}(\uPi)(\kappa) = (\star) \cdot \Big\langle
    \iota^\star\left(\nu_{\underline{\Pi}}\right),\, \cE_{1/N}(2m-1, 0)
    \Big\rangle,
   \]
   where $(\star)$ in both formulae denotes the same constant $\tfrac{p + 1}{p} \cdot (\sqrt{D})^{-k}$, and $\cE_{1/N}(2m)$ is an abbreviation for $\cE_{1/N}(0, 2m-1)$. These formulae differ only in the choice of Eisenstein series: the Eisenstein series $\cE_{1/N}(2m-1,0)$ in the latter formula is the $p$-depletion of the ordinary Eisenstein $\cE^\flat_{1/N}(2m)$.
   
   We can compute both of these cup-products using global zeta-integrals, as in \cite{grossiloefflerzerbesLfunct}, which factor as products of local zeta-integrals. The local integrals are the same at all places other than $p$, so it suffices to compare the local integrals at $p$. We explain below, in the Appendix to this paper, how to compute these zeta-integrals using a symmetry property of the $\GL_2 \times \GL_2$ zeta-integral (analogous to a related computation for $\GSp_4$ zeta integrals carried out in \cite[\S 8]{LPSZ1}); comparing these formulae gives the factor $\left(1-\frac{\alpha_2(m)}{\alpha_1(m)}\right)\left(1-\frac{
              \beta_2(m)}{\alpha_1(m)}\right)$.
  \end{proof}
  
  Since $\frac{\alpha_2(2k)}{\alpha_1(2k)}$ varies $p$-adically in $k$, and the positive integers are dense in $\Spec \Lambda$, we obtain the following factorisation:

  \begin{proposition}
   \label{prop:factorisation}
   We have an identity in $\Lambda$
   \[
    \cL_{p,\As}(\uPi)(\kappa, 1 - \kappa)=\left(1-\frac{\alpha_2(\kappa)}{\alpha_1(\kappa)}\right) \left(1-\frac{\beta_2(\kappa)}{\alpha_1(\kappa)}\right)\cdot \cL^\flat_{p,\As}(\uPi)(\kappa).
   \]
  \end{proposition}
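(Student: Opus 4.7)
The plan is to deduce the proposition directly from the preceding lemma by a Zariski density argument on weight space. Both sides of the claimed identity are \emph{a priori} elements of the fraction field $\mathcal{Q}(\Lambda)$. The left-hand side $\cL_{p,\As}(\uPi)(\kappa, 1 - \kappa)$ is the pullback of the element $\cL_{p,\As}(\uPi) \in \Lambda'$ along the closed subscheme $\sigma = 1 - \kappa$, so it lies in $\Lambda$ by construction. On the right, the factor $\left(1-\tfrac{\alpha_2(\kappa)}{\alpha_1(\kappa)}\right)\left(1-\tfrac{\beta_2(\kappa)}{\alpha_1(\kappa)}\right)$ lies in $\Lambda$, because by hypothesis $\alpha_1$ is a $p$-adic unit and $\alpha_2$, $\beta_2$ vary analytically in $\kappa$; and $\cL^\flat_{p,\As}(\uPi)(\kappa) \in \mathcal{Q}(\Lambda)$, with at worst a simple pole at $\kappa = 0$.

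First I would invoke the preceding lemma, which gives the desired identity at every positive integer weight $\kappa = m \in \ZZ_{>0}$. Next, I would observe that the set of characters $\{x \mapsto x^m : m \in \ZZ_{>0}\}$ is Zariski-dense in each connected component of $\Spec \Lambda$, so any element of $\mathcal{Q}(\Lambda)$ that vanishes at all sufficiently large positive integers must be zero. Applied to the difference of the two sides of the claimed identity (viewed as a meromorphic function on $\cW$ whose only potential pole is at $\kappa = 0$), this forces the identity to hold as an equality in $\mathcal{Q}(\Lambda)$. Since the left-hand side lies in $\Lambda$, so does the right-hand side, and the identity holds in $\Lambda$ as stated.

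There is no substantive obstacle left at this stage: the main technical content — the local zeta-integral calculation relating $\cE^\flat_{1/N}$ and its $p$-depletion $\cE_{1/N}$ — has already been carried out in the proof of the preceding lemma. The only minor point requiring care is to note that the potential simple pole of $\cL^\flat_{p,\As}$ at $\kappa = 0$ does not obstruct the density argument, since $\kappa = 0$ is a single (non-integer) point and the density set consists of strictly positive integers; equivalently, after clearing denominators by multiplying through by $\ell(\kappa)$ one works entirely in $\Lambda$.
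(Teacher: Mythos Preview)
Your proposal is correct and follows essentially the same approach as the paper: the paper's own justification is the single sentence preceding the proposition, namely that the Euler factors vary $p$-adically and the positive integers are dense in $\Spec \Lambda$, so the identity from the lemma propagates. Your write-up is simply a more careful unpacking of this density argument, with the added (and welcome) observations that the left-hand side already lies in $\Lambda$ and that the possible pole of $\cL^\flat_{p,\As}$ at $\kappa=0$ does not interfere with the argument.
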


  If $\Pi$ is a base-change of a form of \emph{trivial} character, then it is not distinguished, so $\cL^\flat_{p,\As}(\kappa)$ is regular at $\kappa = 0$; however, $\alpha_1(0)$ is equal to one of $\alpha_2(0)$ and $\beta_2(0)$, so one of the bracketed terms vanishes. Hence we obtain the following consequence:

  \begin{corollary}
   If $\Pi$ is a non-distinguished base-change form, then $\cL_{p,\As}(\uPi)$ vanishes at $(0, 1)$.
  \end{corollary}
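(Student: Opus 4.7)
The plan is to specialise the identity in \cref{prop:factorisation} at $\kappa = 0$. Because $\Pi$ is assumed non-distinguished, \cref{thm:nopole} first gives that the improved $p$-adic $L$-function $\cL^\flat_{p,\As}(\uPi)$ is regular at $\kappa = 0$, and so takes a finite value there.

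Next, I would analyse the behaviour of the Euler-type factor $\left(1-\frac{\alpha_2(\kappa)}{\alpha_1(\kappa)}\right)\left(1-\frac{\beta_2(\kappa)}{\alpha_1(\kappa)}\right)$ at $\kappa = 0$. Using the compatibility of base-change with local $L$-factors at split primes, the local components $\Pi_{\pp_1}$ and $\Pi_{\pp_2}$ of $\Pi = \BC(\pi)$ are both isomorphic to $\pi_p$; hence the unordered pairs of Satake parameters at the two primes above $p$ coincide, so $\{\alpha_1(0), \beta_1(0)\} = \{\alpha_2(0), \beta_2(0)\}$. In particular, the distinguished unit root $\alpha_1(0)$ coincides with either $\alpha_2(0)$ or $\beta_2(0)$, which forces one of the two bracketed factors to vanish at $\kappa = 0$.

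Combining the two observations, the right-hand side of \cref{prop:factorisation} is, at $\kappa = 0$, the product of a factor finite at $\kappa = 0$ (namely $\cL^\flat_{p,\As}(\uPi)$) with a factor vanishing at $\kappa = 0$ (one of the two Euler-type brackets), and hence vanishes. By the factorisation identity, the left-hand side $\cL_{p,\As}(\uPi)(\kappa, 1 - \kappa)$ vanishes at $\kappa = 0$ as well, which is the claim.

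There is no serious obstacle: the two inputs used -- \cref{thm:nopole} and the base-change compatibility at split primes -- are both directly available, and the regularity of $\cL^\flat_{p,\As}$ under the non-distinguished hypothesis is precisely what allows us to transfer the vanishing from the Euler factor to the two-variable $p$-adic $L$-function without the possibility of cancellation against a pole.
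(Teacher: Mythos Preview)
Your proposal is correct and follows essentially the same route as the paper: the paper also specialises \cref{prop:factorisation} at $\kappa=0$, invokes \cref{thm:nopole} to get regularity of $\cL^\flat_{p,\As}(\uPi)$ from the non-distinguished hypothesis, and observes that $\alpha_1(0)$ coincides with one of $\alpha_2(0),\beta_2(0)$ because $\Pi$ is a base-change (so one bracketed factor vanishes). Your added remark that $\Pi_{\pp_1}\cong\Pi_{\pp_2}\cong\pi_p$ at the split prime is exactly the justification the paper leaves implicit.
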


  Note that we cannot immediately conclude the converse statement that $\cL_{p,\As}(\uPi)(0,1) \ne 0$ when $\Pi$ \emph{is} distinguished, because the order of vanishing of $\left(1-\frac{\alpha_2(\kappa)}{\alpha_1(\kappa)}\right) \left(1-\frac{\beta_2(\kappa)}{\alpha_1(\kappa)}\right)$ at $\kappa = 0$ might be higher than 1. (This seems unlikely, but we do not know how to rule it out at present.)

 \subsection{A second improved $p$-adic $L$-function}

  One can also make an analogous construction with a second family of Eisenstein series $\cE^{\sharp}_{1/N}(\kappa)$ whose $p$-depletion is $\cE_{1/N}(0, \kappa-1)$, a different one-parameter slice of Katz's two-parameter family. This gives a $p$-adic $L$-function $\cL^\sharp_{p,\As}(\uPi)$, which is related (by the same Euler factor as before) to the values of $\cL_{p,\As}(\uPi)(\kappa, \sigma)$ along the line $\sigma = \kappa$, rather than $\sigma = 1-\kappa$.

  However, except in the case $\fN = 1$ (when both constructions trivially coincide), the ``sharp'' $p$-adic $L$-function does not have the same relation to distinction of representations as the ``flat'' version. This is because the constant term of $\cE^{\sharp}_{1/N}(\kappa)$ is a $p$-adic measure interpolating the values of the function $\zeta^*(\tfrac{1}{N}, s)$ (in the notation of \cite[\S 3]{kato04}), defined by the Dirichlet series $\sum_{n \ge 1} \exp\left(\tfrac{2\pi i n}{N}\right) n^{-s}$; and for $N > 1$ this function, and its $p$-adic analogue, are regular at $s = 1$.

  From the viewpoint of $L$-functions, this corresponds to the fact that the Euler factors $C_\ell(\Pi, s)$ relating the primitive and imprimitive Asai $L$-functions can vanish at $s = 0$ (unlike at $s = 1$). Since the primitive Asai $L$-function satisfies a functional equation, \cref{thm:whenpoles} shows that $\Pi$ is distinguished if and only if $L_{\As}(\Pi, 0) \ne 0$; but the possible vanishing of these ``bad'' Euler factors at places dividing $N$ means that one cannot straightforwardly identify distinguished representations purely from the behaviour of their \emph{imprimitive} $L$-functions at $s = 0$.

 \section{Twisted analogues}

  We briefly sketch how the results of the previous section can be extended to twisted Asai $L$-functions. Given a character $\chi$ of conductor $M$ coprime to $p$, we replace the pullback $\iota^\star\left(\nu_{\underline{\Pi}}\right)$ with pullback along the more general correspondences $\iota_{M, N, a}$ described in \cite{leiloefflerzerbes18}. Considering a weighted sum of such correspondences, with $\chi$ as the weighting factor, and pairing with the Eisenstein family at level $M^2 N$ gives a $p$-adic $L$-function interpolating the $\chi$-twisted Asai $L$-series, exactly as in the Bianchi setting studied in \cite{loefflerwilliams18}.

  Arguing exactly as above, we conclude the following:

  \begin{corollary}
   If $\chi$ is a character satisfying $\omega_{\Pi}|_{\QQ} = \chi^{-2}$, but $\Pi$ is not $\chi$-distinguished, then $\cL_{p,\As}(\uPi, \chi)$ vanishes at $(\kappa, \sigma) = (0, 1)$.\qed
  \end{corollary}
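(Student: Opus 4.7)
The plan is to mirror the argument used for the preceding base-change corollary, building up a twisted analog of the framework of Section \ref{sec:improved}. There are three main ingredients: a twisted improved $p$-adic $L$-function, its ``no pole iff not $\chi$-distinguished'' property, and a twisted factorization formula.

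First, I would construct the improved twisted $p$-adic $L$-function $\cL^\flat_{p,\As}(\uPi, \chi) \in \mathcal{Q}(\Lambda)$ by pairing a $\chi$-weighted sum of pullbacks along the correspondences $\iota_{M,N,a}$ of \cite{leiloefflerzerbes18} with the ordinary Eisenstein family $\cE^\flat_{1/M^2 N}(2\kappa)$ at level $M^2 N$. As in the untwisted construction, this lies in $\mathcal{Q}(\Lambda)$ with at worst a simple pole at $\kappa = 0$ inherited from $\cE^\flat$.

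Second, I would prove the twisted analog of \cref{thm:nopole}: $\cL^\flat_{p,\As}(\uPi, \chi)$ is regular at $\kappa = 0$ if and only if $\Pi$ is not $\chi$-distinguished. The argument follows the proof of \cref{thm:nopole} verbatim: the limit $\lim_{\kappa \to 0}\ell(\kappa) \cdot \cL^\flat_{p,\As}(\uPi, \chi)(\kappa)$ computes, up to explicit nonzero $\Gamma$-factors and a choice of $p$-adic and complex periods for $\Pi$, the twisted period integral $\mathcal{I}_\chi(\Pi)$ defined in Section 2, whose non-vanishing is equivalent to $\chi$-distinction. Third, I would establish the twisted factorization
\[
 \cL_{p,\As}(\uPi, \chi)(\kappa, 1-\kappa) = \left(1 - \tfrac{\alpha_2(\kappa)}{\alpha_1(\kappa)}\right)\left(1 - \tfrac{\beta_2(\kappa)}{\alpha_1(\kappa)}\right) \cdot \cL^\flat_{p,\As}(\uPi, \chi)(\kappa),
\]
via the same zeta-integral symmetry argument sketched in the Appendix. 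Since $\chi$ is unramified at $p$, the local zeta integrals at $p$ are untouched by the twist and produce the same Euler factor as in the untwisted case.

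Combining these, if $\Pi$ is not $\chi$-distinguished then $\cL^\flat_{p,\As}(\uPi, \chi)$ is regular at $\kappa = 0$ by Step 2, so the right-hand side evaluates at $\kappa = 0$ to the Euler factor times $\cL^\flat_{p,\As}(\uPi, \chi)(0)$. The parity hypothesis $\omega_{\Pi}|_{\QQ} = \chi^{-2}$, combined with \cref{thm:whenpoles}, places us in the setting where $\Pi$ is a twist of a base-change; then (just as in the untwisted base-change corollary) $\alpha_1(0)$ equals one of $\alpha_2(0)$ or $\beta_2(0)$, so the Euler factor vanishes at $\kappa=0$ and the product is zero. The main obstacle is Step 3: carefully tracking the twisting correspondences through the zeta-integral computation to confirm that the local factor at $p$ is genuinely unaffected by the $\chi$-twist. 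The residue identification in Step 2 is also technically delicate, since it requires matching $p$-adic and complex periods in the presence of the twisting operator at level $M^2 N$, but should proceed along the lines of the Bianchi computation in \cite{loefflerwilliams18}.
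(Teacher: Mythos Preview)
Your three-step framework (improved twisted $p$-adic $L$-function, pole criterion via $\mathcal{I}_\chi(\Pi)$, and factorisation along $\sigma = 1-\kappa$) is exactly the ``arguing exactly as above'' that the paper intends, and Steps~1--3 are all sound in outline. The problem is in the final paragraph, where you deduce the vanishing.

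You write that ``the parity hypothesis $\omega_{\Pi}|_{\QQ} = \chi^{-2}$, combined with \cref{thm:whenpoles}, places us in the setting where $\Pi$ is a twist of a base-change''. This implication is false. \Cref{thm:whenpoles} says that $\Pi$ is a twist of base-change if and only if there \emph{exists} some character for which $\Pi$ is distinguished; it says nothing about representations satisfying only the central-character compatibility $\omega_{\Pi}|_{\QQ} = \chi^{-2}$. That compatibility is merely the condition needed for the period integral to be well-defined --- it holds for plenty of $\Pi$ which are not twists of base-change at all. So your mechanism for forcing the Euler factor to vanish at $\kappa = 0$ breaks down. (Even granting that $\Pi$ is a twist of base-change, say $\Pi = \BC(\pi) \times \psi$, one does not automatically get $\alpha_1(0) \in \{\alpha_2(0),\beta_2(0)\}$: the Satake parameters at $\pp_1$ and $\pp_2$ differ by $\psi(\pp_1)/\psi(\pp_2)$, and in the twisted factorisation the bracketed factors should in any case carry a $\chi(p)$.)

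Note that the untwisted corollary you are modelling on carries the explicit hypothesis ``base-change form'', which is precisely what makes the Euler factor vanish there; the twisted corollary as stated has dropped any such hypothesis. Your Steps~1--3 give the factorisation $\cL_{p,\As}(\uPi,\chi)(0,1) = E_\chi(0)\cdot \cL^\flat_{p,\As}(\uPi,\chi)(0)$ with $\cL^\flat$ regular, but to conclude vanishing you still need an independent reason for $E_\chi(0) = 0$ --- and the hypotheses of the corollary, as written, do not supply one. Either an additional hypothesis on $\Pi$ is implicitly being assumed, or a different argument is required; the inference you have written does not close the gap.
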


\appendix

\section{A zeta-integral computation}

 \subsection{Setup}

  In this section $K$ denotes a local field of characteristic 0, and $q, \pp, \varpi, |\cdot|$ etc have their usual meanings. We let $\psi_1, \psi_2$ be additive characters of $K$ with $\psi_2 = \psi_1^{-1}$. (This is natural in our global application, where $\psi_i$ are the restrictions to the two places above $p$ of an additive character of $\AA_F / F$ trivial on $\AA_\QQ$.) 
  
  We let $H = \GL_2(K)$ and we let $\pi_1$ and $\pi_2$ be two infinite-dimensional (equivalently, generic) irreducible representations of $H$, with central characters $\omega_1$ and $\omega_2$. We shall suppose $\pi_1$ is a principal-series representation $I(\mu_1, \nu_1)$, where $\mu_1, \nu_1$ are smooth characters of $K^\times$ (and $I(-)$ denotes normalised induction from the Borel subgroup $B_H$). We let $\omega = \omega_1 \omega_2$.
  
  For concreteness, we will normalise all our Haar measures on $\GL_2(K)$ and its subgroups so that the $\cO$-points have volume 1. We also assume $K$ is unramified and the characters $\psi_i$ have conductor 1, so our Haar measures are self-dual with respect to $\psi_i$. 
   
 \subsection{A symmetry property of the Rankin--Selberg zeta integral}
  
  \begin{definition}[Godement--Siegel sections]
   For $\Phi \in \cS(K^2)$ (Schwartz functions on $K^2$), we define a function on $H$ by
   \[ 
    f^{\Phi}(h; \omega, s) = |\det h|^s \int_{K^\times} \Phi((0, t) h) |t|^{2s}\omega(t)\, \mathrm{d}^\times h.
   \]
   This transforms in $h$ as an element of the induced representation $\sigma_s \coloneqq I(|\cdot|^{s - 1/2}, |\cdot|^{1/2 - s} \omega^{-1})$.
  \end{definition}
  
  Our goal is to study values of the $\GL_2 \times \GL_2$ zeta integral
  \[ Z(W_1, W_2, \Phi; s) \coloneqq \int_{Z_HN_H \backslash H} W_1(h) W_2(h) f^{\Phi}(h; \omega, s)\, \mathrm{d}h, \]
  where $W_i$ are functions in the Whittaker models $\cW(\pi_i, \psi_i)$. Note that this integral uses the Whittaker model of $\pi_1$ and the principal-series model of $\sigma_s$; and we shall see below that it is proportional to a second integral in which the models are switched around, so it is the principal-series model of $\pi_1$ and the Whittaker model of $\sigma_s$ that appear.
  
  \begin{definition}[Intertwining maps] \
   \begin{enumerate}[(i)]
    \item For $f_1 \in I(\mu_1, \nu_1)$, we write $W_{f_1}(h) = \int_K f_1(\stbt{}{1}{-1}{} \stbt{1}{t}{}{1} h) \psi_2(t) \, \mathrm{d}t$, which is an element of $\cW(\pi_1, \psi_1)$. 
    
    \item Similarly, for $\Phi \in \cS(K^2)$, we write $W^{\Phi}(h; \omega, s) = \int_K f^{\Phi}(\stbt{}{1}{-1}{} \stbt{1}{t}{}{1} h; \omega, s) \psi_2(t) \, \mathrm{d}n$, which lies in the Whittaker model $\cW(\sigma_s, \psi_1)$.
   \end{enumerate}
  \end{definition}
  
  Note that $W^{\Phi}(h; \omega, s)$ is always holomorphic in $s$, although $f^{\Phi}(h; \omega, s)$ may not be (cf.~\cite[\S 8.1]{LPSZ1}). The integral defining $W_{f_1}(h)$ is only absolutely convergent if $|\tfrac{\mu_1}{\nu_1}(\varpi)| < 1$; however, it can be extended to all principal-series representations by intepreting $f_1$ as a flat section of the family of representations $I(\mu_1 |\cdot|^{\xi}, \nu_1 |\cdot|^{-\xi})$, for an auxiliary $\xi \in \CC$,  and continuing analytically in $\xi$ (cf.~\cite[ch.~4, Eq.~(5.32)]{bump97}). The integrals in the next proof should also be understood in the same regularised sense; one can check that all the integrals are absolutely convergent for suitable values of $s$ and $\xi$.

  \begin{proposition}
   \label{prop:zeta1}
   For any $f_1 \in I(\mu_1, \nu_1)$, $W_2 \in \cW(\pi_2, \psi_2)$, and $\Phi \in \cS(F^2)$, we have
   \[ Z(W_{f_1}, W_2, \Phi; s) = \frac{\omega(-1)}{\gamma(\nu_1 \times \pi_2, \psi_2, s)} \int_{Z_H N_H \backslash H} f_1(h) W_2(h) W^{\Phi}(h; \omega, s)\, \mathrm{d}h.\]
  \end{proposition}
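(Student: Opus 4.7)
The plan is to prove the identity by exhibiting both sides as the same $H$-invariant trilinear form on $I(\mu_1, \nu_1) \otimes \pi_2 \otimes \sigma_s$, up to a scalar that is computed by specialising to a lower-rank local functional equation. First, I check that both integrals, viewed as functionals in $(f_1, W_2, \Phi)$ with $s$ fixed, are $H$-equivariant and thus define elements of $\Hom_H(I(\mu_1, \nu_1) \otimes \pi_2 \otimes \sigma_s, \CC)$. By the multiplicity-one theorem for $\GL_2 \times \GL_2$ Rankin--Selberg convolutions (applied to $(\pi_1, \pi_2 \otimes \sigma_s)$ after identifying $\sigma_s$ with a principal series), this Hom-space is at most one-dimensional for $s$ in general position. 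Hence both sides differ by a meromorphic scalar $c(s)$, and the problem reduces to showing $c(s) = \omega(-1) / \gamma(\nu_1 \times \pi_2, \psi_2, s)$.

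To pin down $c(s)$, I evaluate both sides on convenient test data. A natural choice is to take $f_1$ to be a flat section of $I(\mu_1 |\cdot|^\xi, \nu_1 |\cdot|^{-\xi})$ supported near the big Bruhat cell $B_H w B_H$; this isolates the $\nu_1$-line of $I(\mu_1, \nu_1)$ and makes the Jacquet integral $W_{f_1}$ absolutely convergent in an open region of the $(s, \xi)$-parameters. With this choice, the left-hand zeta integral can be unfolded via Iwasawa decomposition and collapses to a Tate-style integral for the pair $\nu_1 \otimes \pi_2$ twisted by $\Phi$. Performing the analogous unfolding on the right-hand side produces a parallel Tate-style integral in which $\Phi$ is replaced by the Whittaker-type transform appearing inside $W^\Phi$. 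The equality between these two Tate-style integrals is precisely the $\GL_2 \times \GL_1$ local functional equation for $\pi_2 \times \nu_1$, which contributes the factor $\gamma(\nu_1 \times \pi_2, \psi_2, s)^{-1}$; the residual sign $\omega(-1)$ comes from commuting the long Weyl element $w$ past the Godement--Siegel section, exactly as in Tate's thesis.

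The main obstacle is convergence and analytic continuation. Both the Jacquet integral defining $W_{f_1}$ and the zeta integral $Z$ are only absolutely convergent in certain ranges of $s$ and the auxiliary parameter $\xi$, so the entire calculation has to be interpreted as an identity of meromorphic functions, with the regions of convergence overlapping just enough to pin down $c(s)$ by analytic continuation; the $\GSp_4$ analogue worked out in \cite[\S 8]{LPSZ1} provides a template for carrying this out rigorously. A secondary technical point is verifying that multiplicity one holds uniformly in $s$ across an open dense subset of the parameter space, which is standard from the theory of Kirillov models of flat sections.
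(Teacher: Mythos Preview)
Your strategy is sound and would yield a valid proof, but it takes a more circuitous route than the paper's argument. The paper does not invoke multiplicity-one at all in the proof proper: it simply substitutes the defining integral for $W_{f_1}$, rewrites the resulting double integral over $Z_H\backslash H$ as an integral over $T_H\backslash H$ with an inner integral over $K^\times$ (using the transformation law of $f_1(Jh)f^\Phi(h)$ under the split torus), recognises that inner integral as a $\GL_2\times\GL_1$ zeta integral for $\pi_2$ against the character $\nu_1|\cdot|^{s-1/2}$, applies the local functional equation to produce $\gamma(\nu_1\times\pi_2,\psi_2,s)^{-1}$, and then runs the same manipulation backwards to reassemble the right-hand side. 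This works for \emph{arbitrary} $(f_1,W_2,\Phi)$ in one pass, so there is no need to fix test data or appeal to one-dimensionality of the Hom-space.

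In fact the paper anticipates your approach in the Remark immediately following the proof: it observes that multiplicity-one (from \cite[\S 14]{jacquet72}) guarantees both sides are proportional by a meromorphic function of $s$, and that in the unramified case one can compute this constant on spherical vectors. What your proposal buys is conceptual clarity about why such an identity should exist; what the paper's direct unfolding buys is that one never has to worry about choosing good test data, checking that the chosen data actually detect the functional, or verifying that multiplicity-one holds off a thin set of $s$. Since the heart of both arguments is the same $\GL_2\times\GL_1$ functional equation, your detour through multiplicity-one is harmless but unnecessary.
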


  \begin{proof}
   Substituting the definition of $W_{f_1}$ in terms of $f_1$, we have
   \[ Z(W_f, W_2, \Phi; s) = \int_{Z_H\backslash H} f_1(J h) W_2(h) f^{\Phi}(h; \omega, s) \, \mathrm{d}h,\quad J = \stbt{}{1}{-1}{}. \]
   Since $f_1(J h) f^{\Phi}(h; \omega, s)$ transforms via $|\cdot|^{s - \tfrac{1}{2}} \nu_1$ under left-translation by $\stbt{\star}{}{}{1}$, we can write this as
   \[ \int_{T_H \backslash H} \left(\int_{K^\times} W_2(\stbt{t}{}{}{1}h) \nu_1(t)|t|^{s-1/2}\, \mathrm{d}^\times t\right) f_1(J h) f^{\Phi}(h; \omega, s)\, \mathrm{d}h.\]
   where $T_H$ is the diagonal torus. The inner integral is equal to \[ \frac{1}{\gamma(\nu_1 \times \pi_2, \psi_2, s)} \int W_2(\stbt{t}{}{}{1} J h)|t|^{1/2 - s} \omega_2^{-1}\nu_1^{-1}(t)\, \mathrm{d}^\times t,\]
   by the functional equation for the $\GL_1$ zeta integral. So we can rewrite the integral as
   \[ \frac{\omega(-1)}{\gamma(\nu_1 \times \pi_2, \psi_2, s)} \int_{T_H\backslash H} \left(\int_{K^\times} W_2(\stbt{t}{}{}{1}h) \nu_1(t))|t|^{1/2 - s} \omega_2^{-1}\nu_1^{-1}(t)\, \mathrm{d}^\times t\right) f_1(h) f^{\Phi}(J h; \omega, s)\, \mathrm{d}h,\]
   and the same argument in reverse now writes this as
   \[ \frac{\omega(-1)}{\gamma(\nu_1 \times \pi_2, \psi_2, s)} \int_{Z_H N_H \backslash H} W_2(h) f_1(h) W^{\Phi}(h; \omega, s)\, \mathrm{d}h\]
   as claimed.
  \end{proof}

  \begin{remark}
   Note that both sides of \cref{prop:zeta1} define $H$-invariant linear functionals on $\pi_1 \otimes \pi_2 \otimes \sigma_s$. For almost all $s$ the space of such $H$-invariant functionals is one-dimensional, by the results of \cite[\S 14]{jacquet72}; so it is clear that both sides must be related by a meromorphic function of $s$, independent of $(W_1, W_2, \Phi)$. In particular, if the $\pi_i$ are both unramified (which is the only case we actually need for our applications), we can simply evaluate both integrals concerned on the spherical vectors to obtain a rather simpler proof of the proposition in this case.
  \end{remark}

 \subsection{A particular case}

  We now suppose $\nu_1$ is unramified (but $\mu_1$ and $\pi_2$ may be ramified); and let $c = \max(1, \operatorname{cond}(\mu_1))$. It follows that for all $r \ge c$, the invariants of $\pi$ under the group $K_1(\pp^r) \coloneqq \{ h \in \GL_2(\cO_K) : h = \stbt{*}{*}{0}{1} \bmod \pp^r\}$ are non-zero, and this space contains a unique eigenvector for the double-coset operator $U = [K_1(\pp^r) \stbt{\varpi}{}{}{1} K_1(\pp^r)]$ with eigenvalue $q^{1/2} \nu_1(\varpi)$.

  \begin{notation}
   Let $W_{\nu_1}$ be the unique basis of the 1-dimensional space $\cW(\pi_1, \psi_1)^{K_1(\pp^c)}[U = q^{1/2} \nu_1(\varpi)]$ such that $W_{\nu_1}(1) = 1$. 
   
   For $r \ge c$, we let $W'_{\nu_1}[r] = (\tfrac{q^{1/2}}{\nu_1(\varpi)})^r \stbt{0}{-1}{\varpi^r}{0} W_{\nu_1}$, which is an eigenvector for the dual Hecke operator $U'$ with the same eigenvalue $q^{1/2} \nu_1(\varpi)$. (The normalisation $(\tfrac{q^{1/2}}{\nu_1(\varpi)})^r$ makes these compatible under the trace maps.)
  \end{notation}

  \begin{lemma}
   Let $f'_{\nu_1}[r]$ be the unique function in $I(\mu_1, \nu_1)$ which vanishes outside $B_H \cdot K_1(\pp^r)$ and takes the value $q^r \mu_1(-1)$ at the identity. Then $W_{f'_{\nu_1}[r]} = W'_{\nu_1}[r]$.
  \end{lemma}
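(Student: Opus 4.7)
The strategy is to verify the identity directly by evaluating both Whittaker functions on the Kirillov cross-section $g = \stbt{a}{0}{0}{1}$, which by multiplicity one in $\cW(\pi_1,\psi_1)$ determines them uniquely. Thus I aim to show
\[
W_{f'_{\nu_1}[r]}\!\bigl(\stbt{a}{0}{0}{1}\bigr)
= \bigl(q^{1/2}/\nu_1(\varpi)\bigr)^{r}\, W_{\nu_1}\!\bigl(\stbt{a}{0}{0}{1}\, w_r\bigr)
\quad \text{for all } a \in K^{\times},
\]
where $w_r = \stbt{0}{-1}{\varpi^{r}}{0}$.

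For the left-hand side, I unfold the defining integral. The argument
$Jn(t)\stbt{a}{0}{0}{1} = \stbt{0}{1}{-a}{-t}$
admits the Iwasawa decomposition
$\stbt{0}{1}{-a}{-t} = \stbt{-a/t}{1}{0}{-t}\stbt{1}{0}{a/t}{1}$,
and lands in the support $B_H\cdot K_1(\pp^{r})$ of $f'_{\nu_1}[r]$ precisely when $a/t\in\pp^{r}$, i.e.\ $v(t)\le v(a)-r$. Combining the left $B_H$-transformation rule in $I(\mu_1,\nu_1)$ with the stipulated value $q^{r}\mu_1(-1)$ on the unipotent slice $U_r^{-} = \{\stbt{1}{0}{c}{1}:c\in\pp^r\}$, the Whittaker integral reduces to
\[
W_{f'_{\nu_1}[r]}\!\bigl(\stbt{a}{0}{0}{1}\bigr)
= q^{r}\,\nu_1(-1)\,\mu_1(a)\,|a|^{1/2}
\int_{v(t)\le v(a)-r}\!\!(\nu_1/\mu_1)(t)\,|t|^{-1}\psi_2(t)\,dt,
\]
a Tate-type integral in $\chi := \nu_1/\mu_1$. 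Since $\chi$ has conductor $c\le r$ and $\psi_2$ has conductor $0$, the annular integral at $v(t)=n$ vanishes unless $n=-c$, in which case it evaluates to the standard local Gauss sum. The sum therefore collapses to a single term (requiring $v(a)\ge r-c$ for non-vanishing).

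For the right-hand side, Bruhat yields $\stbt{a}{0}{0}{1}w_r = \stbt{0}{-a}{\varpi^{r}}{0}$, and the explicit formula for the new vector in the Kirillov model of the ramified principal series $I(\mu_1,\nu_1)$ (with $\nu_1$ unramified) evaluates $W_{\nu_1}$ at this element in closed form, vanishing in the same range $v(a)<r-c$. The identity then reduces to matching scalars: the constant $q^{r}\mu_1(-1)$ in the definition of $f'_{\nu_1}[r]$ is chosen precisely so that the Gauss-sum normalization combines with the $B_H$-modular factor and the sign produced by the Iwasawa decomposition to reproduce the scalar $(q^{1/2}/\nu_1(\varpi))^{r}$ on the right. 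The main obstacle is the careful bookkeeping in this final matching step — tracking signs, the conductor exponent of $\mu_1$, and the Haar-measure conventions in the Gauss sum — which is a standard but fiddly local computation.
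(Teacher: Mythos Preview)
Your approach—direct evaluation of both Whittaker functions along the torus—is genuinely different from the paper's. The paper argues indirectly via the Godement--Siegel section: it writes down an explicit Schwartz function $\Psi_r = \mu_1(-1)\,q^{r-1}(q-1)\,\ch\bigl((1+\pp^r)\times\cO\bigr)$, verifies (using a known torus formula for $W^{\Psi}$) that this maps to the normalised $U$-eigenvector $W_{\nu_1}$, and then simply applies $w_r$ to the section $f^{\Psi_r}$ and reads off its value at the identity. No Tate integral is evaluated, and $W_{\nu_1}$ never needs to be computed off the torus.

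Your direct route has two soft spots. First, the claim that the annular integral ``collapses to a single term at $n=-c$'' is correct only when $\mu_1$ is genuinely ramified; when $\mu_1$ is unramified (so $c=1$ by definition, but $\operatorname{cond}(\chi)=0$), the integrals $\int_{v(t)=n}\chi(t)|t|^{-1}\psi_2(t)\,dt$ are nonzero for every $n\ge -1$, and you must instead sum a finite geometric series. Second, and more seriously, the right-hand side requires $W_{\nu_1}\bigl(\stbt{a}{}{}{1}w_r\bigr)=W_{\nu_1}\bigl(\stbt{0}{-a}{\varpi^r}{0}\bigr)$, and this element does \emph{not} lie in $B_H\cdot K_1(\pp^c)$ (its bottom row has vanishing second entry), so the Kirillov-model values $W_{\nu_1}(\stbt{a}{}{}{1})$ together with the level structure do not determine it directly. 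In the ramified case you can recover it via the Atkin--Lehner pseudo-eigenvalue of the new vector (itself a Gauss sum), but in the unramified case $W_{\nu_1}$ is a $p$-stabilisation of the spherical vector rather than the new vector, and a separate computation is needed. Both gaps are fixable, but the unramified case in particular needs its own treatment, which your sketch does not supply.
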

  
  \begin{proof}
   We have a Godement--Siegel section map from Schwartz functions to $I(\mu_1, \nu_1)$, as usual. Consider the auxiliary function $\Psi_r = \mu_1(-1) \cdot q^{r - 1}(q - 1) \cdot \ch( (1 + \pp^r) \times \cO)$. Then we compute that $W^{\Psi_r}(\stbt{\varpi^k}{}{}{1}) = q^{-k/2} \nu_1(\varpi^k)$, using Remark 8.2 of \cite{LPSZ1}, so $\Psi_r$ maps to the normalised eigenfunction $W_{\nu_1}$.

   Thus $\stbt{0}{-1}{\varpi^r}{0} f^{\Psi_r}$ is a scalar multiple of the characteristic function stated, and it remains to compute its value at the identity. From the definitions we compute that this value is $(q^{1/2} \nu_1(\varpi))^r \mu_1(-1)$; thus scaling by $(\tfrac{q^{1/2}}{\nu_1(\varpi)})^r$ gives the result claimed.
  \end{proof}

  \begin{proposition}
   \label{prop:zeta2}
   For any $W_2 \in \cW(\pi_2)$ and $\Phi \in \cS(K^2)$, the sequence
   \[ \left(Z(W'_{\nu_1}[r], W_2, \Phi; s)\right)_{r \ge c} \]
   stabilises for $r \gg 0$, and its limiting value is given by
   \[ \frac{q}{q+1} \cdot \frac{1}{\gamma(\nu_1 \times \pi_2, \psi_1, s)}\cdot \int_{K^\times}  W_2(\stbt{y}{}{}{1})W^{\Phi}(\stbt{y}{}{}{1}; \omega, s) \frac{\mu_1(y)}{|y|^{1/2}}\, \mathrm{d}^\times y. \]
  \end{proposition}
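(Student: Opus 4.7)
The plan is to combine \cref{prop:zeta1} with the preceding Lemma. Since $W_{f'_{\nu_1}[r]} = W'_{\nu_1}[r]$, applying the Proposition to $f_1 = f'_{\nu_1}[r]$ gives
\[
Z(W'_{\nu_1}[r], W_2, \Phi; s) = \frac{\omega(-1)}{\gamma(\nu_1 \times \pi_2, \psi_2, s)} \int_{Z_H N_H \backslash H} f'_{\nu_1}[r](h)\, W_2(h)\, W^{\Phi}(h; \omega, s)\, \mathrm{d}h,
\]
so the task reduces to evaluating this inner integral (and showing it stabilises) and then reconciling the gamma factor at $\psi_2 = \psi_1^{-1}$ with the one at $\psi_1$ in the target formula.

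To evaluate the inner integral, I would use the Iwasawa decomposition to parameterise $Z_H N_H \backslash H$ by pairs $(y, k)$ with $y \in K^\times$ and $k \in \GL_2(\cO)$, with measure $|y|^{-1}\, \mathrm{d}^\times y \, \mathrm{d}k$. Since $f'_{\nu_1}[r]$ is supported on $B_H \cdot K_1(\pp^r)$, the $k$-integrand is supported on $(B_H \cap \GL_2(\cO)) \cdot K_1(\pp^r)$, which decomposes into $q^{r-1}(q-1)$ cosets of $K_1(\pp^r)$ indexed by $u \in \cO^\times/(1+\pp^r)$ via the diagonal torus $u \mapsto \stbt{1}{}{}{u}$. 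For $r$ at least the levels of $W_2$ and $W^{\Phi}$ (both finite since these are smooth vectors), the transformation law for $f'_{\nu_1}[r]$ as a section of $I(\mu_1, \nu_1)$ gives $f'_{\nu_1}[r](\stbt{y}{}{}{u}k') = q^r \mu_1(-1) \mu_1(y) |y|^{1/2}$ (using that $\nu_1$ is unramified and $|u|=1$), while the central characters of $\pi_2$ and $\sigma_s$ combine to produce a factor $\omega_1^{-1}(u) = \mu_1^{-1}(u)$ when converting $W_i(\stbt{y}{}{}{u})$ into $W_i(\stbt{y/u}{}{}{1})$. After the substitution $y \mapsto uy$ in the outer integral, this $\mu_1^{-1}(u)$ cancels the $\mu_1(u)$ absorbed from $\mu_1(y)$, so each of the cosets contributes identically.

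The $r$-dependent prefactor then collapses to
\[
q^{r-1}(q-1) \cdot q^r \mu_1(-1) \cdot \mathrm{vol}(K_1(\pp^r)) = q^{r-1}(q-1) \cdot q^r \mu_1(-1) \cdot \frac{1}{q^{2(r-1)}(q^2-1)} = \frac{q\, \mu_1(-1)}{q+1},
\]
establishing stabilisation and giving the inner integral as $\tfrac{q\,\mu_1(-1)}{q+1}\int_{K^\times} \mu_1(y) |y|^{-1/2} W_2(\stbt{y}{}{}{1}) W^\Phi(\stbt{y}{}{}{1}; \omega, s) \, \mathrm{d}^\times y$. Finally, the standard change-of-additive-character formula gives $\gamma(\nu_1 \times \pi_2, \psi_2, s) = \omega_2(-1)\, \gamma(\nu_1 \times \pi_2, \psi_1, s)$ (using $\psi_2 = \psi_1^{-1}$ and $\nu_1(-1)^2 = 1$); combining with $\omega(-1) \mu_1(-1) = \omega_2(-1) \mu_1(-1)^2 = \omega_2(-1)$ collapses the total prefactor to $1/\gamma(\nu_1 \times \pi_2, \psi_1, s)$, yielding the claimed formula.

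The main obstacle is the measure/counting bookkeeping that produces the clean constant $q/(q+1)$: the factor $q^r$ from $f'_{\nu_1}[r](1)$, the coset count $q^{r-1}(q-1)$, and the volume $1/(q^{2(r-1)}(q^2-1))$ of $K_1(\pp^r)$ must conspire exactly, which requires careful use of the normalisation of Haar measure ($\cO$-points having volume $1$) throughout. A secondary subtlety is tracking the behaviour of the character $\mu_1$ on $(B_H \cap \GL_2(\cO)) \cap K_1(\pp^r)$ when $\mu_1$ is ramified, which is precisely what forces the lower bound $r \ge c = \max(1, \operatorname{cond}(\mu_1))$ in the statement.
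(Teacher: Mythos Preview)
Your proof is correct and follows the same strategy as the paper: apply \cref{prop:zeta1} via the preceding Lemma, then use that $f'_{\nu_1}[r]$ is concentrated near the identity coset of $B_H \backslash H$ to collapse the integral to the torus integral, and finally convert the $\gamma$-factor from $\psi_2$ to $\psi_1$ using $\mu_1(-1)\omega(-1) = \omega_2(-1)$. The only difference is organisational: the paper writes the $Z_H N_H \backslash H$ integral as an iterated integral over $K^\times$ (inner) and $B_H \backslash H$ (outer) and reads off the constant directly as $f'_{\nu_1}[r](1)\cdot \vol_{B_H \backslash H}\bigl(B_H K_1(\pp^r)\bigr) = q^r\mu_1(-1)\cdot (q^{r-1}(q+1))^{-1}$, whereas you unfold via Iwasawa and count $K_1(\pp^r)$-cosets inside $(B_H\cap K_H)K_1(\pp^r)$, arriving at the same $\tfrac{q}{q+1}\mu_1(-1)$.
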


  \begin{proof}
   We use \cref{prop:zeta1} to express $Z(W'_{\nu_1}[r], W_2, \Phi; s)$ as an integral in terms of $f'_{\nu_1}[r]$. This we can compute as follows:
   \begin{multline*}
    \int_{Z_H N_H \backslash H} f_1(h) W_2(h) W^{\Phi}(h; \omega, s)\, \mathrm{d}h \\
    = \int_{B_H \backslash H}\left(
     \int_{K^\times} f'_{\nu_1}[r](\stbt{y}{}{}{1} h)W_2(\stbt{y}{}{}{1} h) W^{\Phi}(\stbt{y}{0}{0}{1}h; \omega, s) \mathrm{d}^\times y\right)\mathrm{d}h \\
     \int_{B_H \backslash H}\left(
    \int_{K^\times} \frac{\mu_1(y)}{|y|^{1/2}} W_2(\stbt{y}{0}{0}{1} h) W^{\Phi}(\stbt{y}{0}{0}{1}h; \omega, s) \mathrm{d}^\times y\right) f'_{\nu_1}[r](h) \,\mathrm{d}h.
   \end{multline*}
   Since $f'_{\nu_1}[r]$ is supported in arbitrarily small neighbourhoods of the identity in $B_H \backslash H$ for $r \gg 0$, the outer integral over $B_H \backslash H$ is just the inner integral for $h = 1$ multiplied by the factor $f'_{\nu_1}[r](1) \cdot \vol\left(B_H K_1(\pp^r)\right)$, for the unramified Haar measure on $B_H \backslash H$. This volume is $1 / (q^{r-1} (q + 1))$, and we computed $f'_{\nu_1}[r](1)$ above, which gives
   \[ f'_{\nu_1}[r](1) \cdot \vol\left(B_H K_1(\pp^r)\right) = \tfrac{q}{q+1} \cdot \mu_1(-1).\]
   Putting this together with \cref{prop:zeta1}, and noting that $\mu_1(-1) \omega(-1) = \omega_2(-1) = \frac{\gamma(\nu_1 \times \pi_2, \psi_1, s)}{\gamma(\nu_1 \times \pi_2, \psi_2, s)}$, gives the result.
  \end{proof}

  \begin{remark}
   This is an analogue for the $\GL_2 \times \GL_2$ Rankin--Selberg zeta integral of \cite[Prop 8.15]{LPSZ1} in the setting of $\GSp_4 \times \GL_2$ zeta-integrals.
  \end{remark}

 \subsection{Unramified case}
  
  In our global situation, we want to compare the values $Z(W_1, W_2, \Phi)$ for two different triples $(W_1, W_2, \Phi)$. In both cases $\pi_i$ are unramified representations, $W_2$ is the unramified Whittaker function, and $W_1$ a $U'$-eigenvector; and we want to compare the values for two different $\Phi$, one corresponding to the $p$-depleted Eisenstein series $\cE$, and the other to the ordinary Eisenstein series $\cE^\flat$. We shall do this using the formula above, labelling the characters from which $\pi_1$ is induced so that $W_1$ corresponds to the vector $W_{\nu_1}'[r]$ above.
  
  In both cases, the values of the Whittaker function $W^{\Phi}$ along the torus $\stbt{y}{}{}{1}$ correspond to the $q$-expansion coefficients of the Eisenstein series at $p$. For the $p$-depleted Eisenstein series, these vanish outside $\cO_K^\times$, so the integrand is the indicator function of $\cO_K^\times$ and the integral is simply 1; this recovers, by a rather different method, the interpolation formula for $p$-adic $L$-functions computed in \cite[\S 6.1]{chenhsieh20} that we quoted in the proof of \cite[Theorem C]{grossiloefflerzerbesLfunct}.
  
  For the $p$-ordinary Eisenstein series, $W^{\Phi}(\stbt{y}{0}{0}{1})$ is zero for $|y| > 1$, but for $y \in \cO_K$ its values agree with the unramified character $\omega^{-1} |\cdot|^{1-s}$. Hence the integral of \cref{prop:zeta2} is the unramified $\GL_2$ zeta integral computing $L(\nu_1^\vee \times \pi_2^\vee, 1-s)$. In particular, the ratio of the zeta-integrals for the two test data is precisely $L(\nu_1^\vee \times \pi_2^\vee, 1-s) ^{-1}$.

%

\begin{thebibliography}{LPSZ21}

\bibitem[Asa77]{asai77}
\textsc{T.~Asai}, \articlehref{http://doi.org/10.1007/BF01391220}{\emph{On
  certain {D}irichlet series associated with {H}ilbert modular forms and
  {R}ankin's method}}, Math. Ann. \textbf{226} (1977), no.~1, 81--94.
  \MR{0429751}

\bibitem[BGV20]{BGV20}
\textsc{B.~Balasubramanyam, E.~Ghate,} and \textsc{R.~Vangala},
  \emph{{$p$}-adic {A}sai {$L$}-functions attached to {B}ianchi cusp forms},
  Modular forms and related topics in number theory, Springer Proc. Math.
  Stat., vol. 340, Springer, 2020, pp.~19--45. \MR{4179366}

\bibitem[Bum97]{bump97}
\textsc{D.~Bump},
  \articlehref{http://doi.org/10.1017/CBO9780511609572}{\emph{Automorphic forms
  and representations}}, Cambridge Studies in Advanced Mathematics, vol.~55,
  Cambridge Univ. Press, 1997. \MR{1431508}

\bibitem[CH20]{chenhsieh20}
\textsc{S.-Y. Chen} and \textsc{M.-L. Hsieh}, \articlehref{https://doi.org/10.2969/aspm/08610195}{\emph{On primitive {$p$}-adic
  {R}ankin-{S}elberg {$L$}-functions}}, Development of {I}wasawa theory---the
  centennial of {K}. {I}wasawa's birth, Adv. Stud. Pure Math., vol.~86, Math.
  Soc. Japan, 2020, pp.~195--242. \MR{4385083}

\bibitem[Fli88]{flicker88}
\textsc{Y.~Z. Flicker},
  \articlehref{http://doi.org/10.24033/bsmf.2099}{\emph{Twisted tensors and
  {Euler} products}}, Bull. Soc. Math. France \textbf{116} (1988), no.~3,
  295--313. \MR{89m:11049}

\bibitem[Gha96]{ghatethesis}
\textsc{E.~Ghate}, \emph{Critical values of the {A}sai {L}-function in the
  imaginary quadratic case}, Ph.D. thesis, UCLA, 1996. \MR{2694881}

\bibitem[GLZ23]{grossiloefflerzerbesLfunct}
\textsc{G.~Grossi, D.~Loeffler,} and \textsc{S.~Zerbes},
  \articlehref{http://arxiv.org/abs/2307.07004}{\emph{P-adic {A}sai
  {L}-functions for quadratic {H}ilbert eigenforms}}, preprint, 2023,
  \path{arXiv:2307.07004}.

\bibitem[Jac72]{jacquet72}
\textsc{H.~Jacquet},
  \articlehref{http://doi.org/10.1007/BFb0058503}{\emph{Automorphic forms on
  {$\operatorname{GL}(2)$}. {P}art {II}}}, Lecture Notes in Math., vol. 278,
  Springer, Berlin, 1972. \MR{0562503}

\bibitem[Kat04]{kato04}
\textsc{K.~Kato},
  \articlehref{http://smf4.emath.fr/en/Publications/Asterisque/2004/295/html/smf_ast_295_117-290.html}{\emph{{$P$}-adic
  {H}odge theory and values of zeta functions of modular forms}},
  Ast{\'e}risque \textbf{295} (2004), 117--290, Cohomologies $p$-adiques et
  applications arithm{\'e}tiques. III. \MR{2104361}

\bibitem[LR98]{lapidrogawski98}
\textsc{E.~Lapid} and \textsc{J.~Rogawski},
  \articlehref{http://doi.org/10.1515/form.10.2.175}{\emph{On twists of
  cuspidal representations of {$\operatorname{GL}(2)$}}}, Forum Math.
  \textbf{10} (1998), no.~2, 175--197. \MR{1611951}

\bibitem[LLZ14]{leiloefflerzerbes14}
\textsc{A.~Lei, D.~Loeffler,} and \textsc{S.~L. Zerbes},
  \articlehref{http://doi.org/10.4007/annals.2014.180.2.6}{\emph{Euler systems
  for {R}ankin--{S}elberg convolutions of modular forms}}, Ann. of Math. (2)
  \textbf{180} (2014), no.~2, 653--771. \MR{3224721}

\bibitem[LLZ18]{leiloefflerzerbes18}
\bysame, \articlehref{http://doi.org/10.1017/fms.2018.23}{\emph{Euler systems
  for {H}ilbert modular surfaces}}, Forum Math. Sigma \textbf{6} (2018),
  no.~e23. \MR{3876763}

\bibitem[LPSZ21]{LPSZ1}
\textsc{D.~Loeffler, V.~Pilloni, C.~Skinner,} and \textsc{S.~L. Zerbes},
  \articlehref{http://doi.org/10.1215/00127094-2021-0049}{\emph{Higher {H}ida
  theory and {$p$}-adic {$L$}-functions for {$\operatorname{GSp}(4)$}}}, Duke
  Math. J. \textbf{170} (2021), no.~18, 4033--4121. \MR{4348233}

\bibitem[LW20]{loefflerwilliams18}
\textsc{D.~Loeffler} and \textsc{C.~Williams},
  \articlehref{http://doi.org/10.2140/ant.2020.14.1669}{\emph{{$p$}-adic {A}sai
  {$L$}-functions of {B}ianchi modular forms}}, Algebra Number Theory
  \textbf{14} (2020), no.~7, 1669--1710. \MR{4150247}

\bibitem[Nam22]{namikawa22}
\textsc{K.~Namikawa},
  \articlehref{https://doi.org/10.1007/s00229-021-01341-3}{\emph{A construction
  of {$p$}-adic {A}sai {$L$}-functions for {$\rm GL_2$} over {CM} fields}},
  Manuscripta Math. \textbf{169} (2022), no.~3-4, 461--517. \MR{4493647}

\end{thebibliography}

\providecommand{\bysame}{\leavevmode\hbox to3em{\hrulefill}\thinspace}
\renewcommand{\MR}[1]{%
 MR \href{http://www.ams.org/mathscinet-getitem?mr=#1}{#1}.
}
\providecommand{\href}[2]{#2}
\newcommand{\articlehref}[2]{\href{#1}{#2}}

\end{document}